\newcommand{\N}{\mathbb{N}}
\newcommand{\R}{\mathbb{R}}
\newcommand{\mmS}{\mathcal{S}}
\newcommand{\mms}{\mathcal{S}}
\newcommand{\mmC}{\mathcal{C}}
\newcommand{\mmR}{\mathcal{R}}
\newcommand{\omR}{\overline{\R}}
\newcommand{\wG}{\widehat{G}}
\newcommand{\mmsa}{\mmS_{\alpha}}
\DeclareMathOperator*{\Con}{Con}
\newenvironment{enumerate*}[1][{}]{\begin{itemize}}{\end{itemize}}
\numberwithin{equation}{section}
\newtheorem{theorem}[equation]{Theorem}
\newtheorem{proposition}[equation]{Proposition}
\newtheorem{corollary}[equation]{Corollary}
\newtheorem{lemma}[equation]{Lemma}
\theoremstyle{definition}
\newtheorem{definition}[equation]{Definition}
\title{Variable elimination in chemical reaction networks with mass action kinetics}
\author{Elisenda Feliu, Carsten Wiuf}
\thanks{{\it Authors affiliation}: Bioinformatics Research Centre, Aarhus University, C. F. M\o llers All\'{e} 8, DK-8000 Aarhus, Denmark}
\thanks{{\it Corresponding author}: Elisenda Feliu, efeliu@birc.au.dk}
\date{\today}
\begin{document}

\maketitle

\begin{abstract}
We consider chemical reaction networks taken with mass action kinetics.  The steady states of such a system are solutions to a system of polynomial equations. Even for small systems the  task of finding the solutions is daunting. We  develop an algebraic  framework and procedure for linear elimination of variables. The procedure reduces the variables in the system to a set of ``core'' variables by eliminating variables corresponding to a set of non-interacting species.
The steady states are parameterized algebraically by the core variables, and a graphical condition is given for when a steady state with positive core variables necessarily have all variables positive.
Further, we characterize graphically the sets of eliminated variables that are constrained by a conservation law and show that this conservation law takes a specific form. 

\medskip
\emph{Keywords: } semiflow, species graph, non-interacting species, spanning tree, polynomial equations

\end{abstract}

\section{Introduction}
The goal of this work is to discuss linear elimination of variables at steady state in Chemical Reaction Networks (CRNs) taken with  mass action kinetics. We use  the formalism of Chemical Reaction Network Theory (CRNT) that puts CRNs into a mathematical, in particular algebraic, framework. CRNT was developed around 40 years ago, mainly by Horn, Jackson and Feinberg \cite{feinberg-first,feinbergnotes,feinberg-invariant,hornjackson}. Its usefulness for analysis of  CRNs is continuously being supported \cite{craciun-feinbergI,craciun-feinbergII,shinar-science}.

We introduce the elimination procedure  by going through a specific  example.  Consider five chemical species $A,B,C,D,E$ that interact according to the reactions:
$$\xymatrix@C=15pt{r_1:A+2B \ar[r] & D } \quad \xymatrix@C=15pt{r_2:D \ar[r] & A+C }\quad \xymatrix@C=15pt{r_3:C+D \ar[r] & E }\quad \xymatrix@C=15pt{r_4: E \ar[r] & A+B. } $$
The molar  concentration of a species $S$ at time $t$ is denoted by $c_S=c_S(t)$. By employing the common assumption  that  reaction rates are of \emph{mass action} type, the concentrations change with time according to a system of ordinary  differential equations (ODEs):
\begin{align*}
\dot{c_A} &= -k_{1}c_Ac_B^2 + k_{2}c_D + k_{4}c_E  & \dot{c_B} &= -2k_{1}c_Ac_B^2 +  k_{4}c_E  &
\dot{c_C} &= k_{2}c_D -k_{3}c_Cc_D \\ \dot{c_D} &= k_{1}c_Ac_B^2 - k_{2}c_D - k_{3}c_Cc_D  &
\dot{c_E} &=  k_{3}c_Cc_D -k_{4}c_E, 
\end{align*}
where  $k_{i}$ denotes the \emph{positive} rate constant  of reaction $r_i$.
Observe that $\dot{c_A}+\dot{c_D}+\dot{c_E}=0$, which implies that $c_A+c_D+c_E$ is constant over time, and fixed by the sum of  initial concentrations $c_0=c_A(0)+c_D(0)+c_E(0)$. The equation $c_0=c_A+c_D+c_E$ is called a \emph{conservation law}.

We are interested in the steady state solutions of this system, in particular,  the \emph{positive}  steady state solutions, that is,  the solutions for which all concentrations are positive. The steady state solutions are found by  setting the ODEs to zero. Consider  the equations $\dot{c_A}=0$, $\dot{c_D}=0$, and $\dot{c_E}=0$:
\begin{align}
0 &=  -k_{1}c_Ac_B^2 + k_{2}c_D + k_{4}c_E, & 0 &= k_{1}c_Ac_B^2 - k_{2}c_D - k_{3}c_Cc_D,  \label{ex:cut} \\
0 &=  k_{3}c_Cc_D -k_{4}c_E. \nonumber 
\end{align}
The equations form a system of polynomial equations in $c_A, c_B, c_C, c_D, c_E$ with real coefficients. None of the equations contain a monomial with more than one of the variables $c_A,c_D,c_E$. Further, the degree of $c_A,c_D,c_E$ is one in all equations. In other words, if we let $\Con=\{k_1,k_2,k_3,k_4\},$ then \eqref{ex:cut} is a linear system of equations in the variables $c_A,c_D,c_E$ with coefficients in the field $\R(\Con\cup\{c_B,c_C\})$:
$$\left(\begin{array}{ccc}
-k_1c_B^2 & k_2 & k_4 \\ 
 k_1 c_B^2 & -k_2- k_3c_C & 0  \\
0 &  k_3c_C & -k_4 \end{array}
\right)\left(\begin{array}{c} c_A \\ c_D \\ c_E \end{array}
\right)=0.$$
The column sums of the $3\times 3$ matrix are zero, because of the conserved amount $c_0$. In fact, the matrix has  rank $2$ in   $\R(\Con\cup\{c_B,c_C\})$ and the solutions of the system form a   line parameterized for example by $c_D$:
$$c_A=\frac{ k_{1}c_B^2 }{k_2+ k_3 } c_D,\qquad c_E=\frac{k_4}{ k_3c_C}c_D. $$ 
These solutions are well-defined in the field  $\R(\Con\cup\{c_B,c_C\})$. If positive values of  $c_B,c_C,c_D$ are given, then the steady state values of $c_A,c_E$ are positive and completely determined. Further, since   $c_A+c_D+c_E=c_0$, we find that
$$ c_D = c_0\left(1+\frac{ k_{1}c_B^2 }{k_2+ k_3 }  +\frac{k_4}{ k_3c_C} \right)^{-1}$$ 
and conclude that the positive steady states  are fully determined by the positive steady state solutions  of $c_B,c_C$. These solutions are found from the equations $\dot{c_B}=0$ and $\dot{c_C}=0$ in $c_B,c_C$, by substituting the values of $c_A,c_D,c_E$. Note that the equations can always be rewritten in polynomial form.

Let us now start from the equations $\dot{c_C}=0$ and $\dot{c_E}=0$:
$k_{2}c_D -k_{3}c_Cc_D=0$ and $k_{3}c_Cc_D -k_{4}c_E=0$.
This system is linear in the variables $c_C,c_E$ with coefficients in the field $\R(\Con\cup\{c_D\})$. Further, the  system has maximal rank in $\R(\Con\cup\{c_D\})$   and thus has a unique solution
$c_C=k_2/k_3$, $c_E=k_2c_D/k_4$ in $\R(\Con\cup\{c_D\})$.
As above, the variables $c_C,c_E$ can be eliminated and recovered from any positive steady state solution  $c_A,c_B,c_D$ of the remaining equations. 

The approaches that are used to eliminate the variables   in $\{c_A,c_D,c_E\}$ and $\{c_C,c_E\}$ differ:  In the former case the system is homogeneous, does not have maximal rank, and the conservation law is required for full elimination. In the latter the system has maximal rank but is not homogeneous. At this point we might ask: What are the similarities between the two sets of variables that  enable their elimination  from the steady state equations? What are the differences  that lead to different approaches? 
The species in both sets do not interact with each other, that is, they do not appear on the same side of a reaction, and further all concentrations have degree one in all the equations in which they appear. Such sets are called \emph{non-interacting}. However, in the first case the  sum of concentrations is conserved, and the set is what we call a \emph{cut}, while in the second case it is not.  Importantly, the eliminated variables are  non-negative whenever the non-eliminated variables (the \emph{core} variables) are positive. Furthermore, the cases in which zero concentrations of the eliminated variables can occur can be completely characterized. The concentration $c_B$ cannot be linearly eliminated because  it has degree $2$ in some equations.

This reaction system is  small  compared to real biochemical systems and can be manipulated manually. For an arbitrary  CRN, most non-interacting sets
can be eliminated using one of the approaches outlined above, depending on the presence or absence of a conserved amount.  After reduction, the (positive) steady states are  the solutions to polynomial equations depending on the core  variables only. Thus the steady states form  an algebraic variety in the core variables. 

In this manuscript we discuss a general procedure  for  linear elimination of variables, embracing the two approaches described above. The design of the procedure relies on (a specific version of) the \emph{species graph}. 
Subsets of species that can be eliminated are non-interacting. These sets correspond to a specific type of subgraphs and in any such set, the corresponding subgraph encodes the presence or absence of a conservation law relating the concentrations in the set. We study the interplay between non-interacting sets, subgraphs, and conservation laws and relate subgraph connectedness  to minimality of conservation laws and the existence of conservation laws to the so-called \emph{full} subgraphs. Thus, the results obtained here are of interest in their own. The Matrix-Tree theorem \cite{Tutte-matrixtree}  is key to study positivity of  solutions  \cite{TG-rational}. 

The elimination procedure has interesting potential applications. First of all, essential  information about the system at steady state is contained in the equations for the  core variables. Thus, experimental knowledge about the concentrations of the core variables is sufficient to explore the system at steady state. Further,  the species graph can be used in experimental planning  by choosing  (if possible)  a subgraph that optimizes the  information  in the experiment.

Secondly,  two different reaction systems involving the same chemical species can be discriminated based on the core  variables alone and thus used for model selection. This is possible, irrespectively whether the rate constants are known or not \cite{G-distributivity,MG-multisite}, if  the two algebraic varieties described by the core variables take different forms. 
A series of measurements with different initial concentrations can determine which variety the measurements belong to.

Finally, another potential application concerns the emergence of multiple positive steady states in a specific system. Many mathematical tools for detecting whether a system has at most one positive steady state exist  \cite{angelisontag2,craciun-feinbergI,feinberg-def0}. However, when these fail, it is not straightforward to conclude that the system  admits more than one positive steady state and  rate constants need to be found for which  this is true. This is typically done by performing a random parameter search. Elimination of variables might reduce the computational burden substantially and decrease the likelihood of numerical errors.

This work builds on our previous work on variable elimination  in so-called \emph{Post-Trans\-la\-tional Modification} (PTM) systems \cite{fwptm}. PTM systems form a  special type of biochemical reaction networks that are particularly  abundant in cell signaling and have been the focus of much theoretical research \cite{Heinrich-kinase,Huang-Ferrell,Markevich-mapk}. A subclass of  PTM systems was studied by Thomson and Gunawardena \cite{TG-rational}.  The present work extends the elimination  procedure for PTM systems to arbitrary CRNs. By doing so, some particularities of PTM systems are uncovered to be irrelevant.

The outline of the paper is the following. We introduce the notation, some preliminaries, and  CRNs together with their associated mass action ODEs. We proceed to  discuss  conservation laws arising from so-called semiflows, with special attention to  minimal semiflows. Next, the species graph and its relevant subgraphs (full and non-interacting) are defined, and we proceed to discuss relations between the subgraphs and semiflows.   We then present the variable elimination procedure  and the reduction of the steady state equations to a polynomial system  in the core variables. Using the graphical representation, we show that positive solutions of the core variables in the reduced system correspond to  non-negative steady states of the CRN, in which only the eliminated variables can possibly be zero.   

\section{Notation}
Let $\R_{+}$ denote the set of positive real numbers (without zero) and $\omR_{+}$  the set of non-negative real numbers (with zero). Given a finite set $\mathcal{E}$, let $\R^{\mathcal{E}}$ be the real vector space of formal sums $v=\sum_{E\in \mathcal{E}} \lambda_E E$, with $\lambda_E\in \R$. If $\lambda_E\in \R_{+}$ (resp.  $\omR_{+}$) for all $E\in \mathcal{E}$,  then we write $v\in \R_{+}^{\mathcal{E}}$ (resp. $\omR_{+}^{\mathcal{E}}$).

\medskip
{\bf S-positivity.} 
Let $\R[\mathcal{E}]$ denote the ring of real polynomials in $\mathcal{E}$.  A monomial is a  polynomial of the form
$\lambda  \prod_{E\in \mathcal{E}} E^{n_E}$ for some $\lambda\in \R\setminus \{0\}$ and  $n_E\in \N_0$ (the natural numbers including zero). A non-zero polynomial  in $\R[\mathcal{E}]$ with  non-negative coefficients is called \emph{S-positive}. Any assignment  $a\colon\mathcal{E}\rightarrow \R_+$ induces an evaluation map $e_a\colon\R[\mathcal{E}] \rightarrow \R$.  If $p\in \R[\mathcal{E}]$ is S-positive, then $e_a(p)> 0$.
 
A rational function $f$ in $\mathcal{E}$ is S-positive if it is a quotient of two S-positive polynomials in $\mathcal{E}$. Then $e_a(f)$ is well-defined and positive for any assignment  $a\colon \mathcal{E}\rightarrow \R_+$.  In general, a rational function $f=p/q$ in  $z_1,\dots,z_s$ and  coefficients in  $\R(\mathcal{E})$ is S-positive if  the coefficients of $p$ and $q$ are S-positive rational functions in $\mathcal{E}$. Then $e_a(f)$ is an S-positive rational function in $\R(z_1,\dots,z_s)$, for any assignment $a:\mathcal{E}\rightarrow \R_+$. Assume that  $E=g(\widehat{\mathcal{E}})$  for some rational function  $g$ in $\widehat{\mathcal{E}}=\mathcal{E}\setminus \{E\}$ and  $E\in \mathcal{E}$. Then, if $f$ is a rational function in $\mathcal{E}$, substituting $g$ into $f$ gives $f$ as a rational function in $\widehat{\mathcal{E}}$.

\medskip
{\bf Graphs and the Matrix-Tree theorem.} Let $G$ be a directed graph with node set $\mathcal{N}$. A \emph{spanning tree} $\tau$ of $G$ is a directed subgraph with node set $\mathcal{N}$ and such that the corresponding undirected graph is connected and acyclic. Self-loops are by definition excluded from a spanning tree.
There is a (unique) undirected path between any two nodes in a spanning tree \cite{Diestel}. We say that the spanning tree $\tau$ is \emph{rooted} at a node $v$ if  the unique path between any node $w$ and $v$ is directed from $w$ to $v$. As a consequence, $v$ is the only node in $\tau$ with no edges of the form $v\rightarrow w$ (called out-edges). Further, there is no node in $\tau$ with two out-edges.  The graph $G$ is \emph{strongly connected} if there is a directed path from $v$ to $w$ for any pair of nodes $v,w$. Any directed path from $v$ to $w$ in a strongly connected graph can be extended to a spanning tree rooted at $w$. Some general references for graph theory are \cite{Diestel} and \cite{Gross-Yellen}.

If $G$  is  labeled, then $\tau$ inherits a labeling from $G$ and we define
$$\pi(\tau)=\prod_{x\xrightarrow{a}y \in \tau} a.$$
Assume that $G$ has no self-loops. We order the node set $\{v_1,\dots,v_n\}$ of $G$  and let $a_{i,j}$ be the label of the edge $v_i\rightarrow v_j$. Further, we set $a_{i,j}=0$ for $i\neq j$ if there is no edge from $v_i$ to $v_j$ and $a_{i,i}=0$. Let $\mathcal{L}(G)=\{\alpha_{i,j}\}$ be the \emph{Laplacian} of $G$, that is, the matrix with
$\alpha_{i,j} =a_{j,i}$ if $i\neq j$ and $\alpha_{i,i} =-\sum_{k=1}^n a_{i,k}$,
such that the column sums are zero. Any matrix whose column sums are zero can be realized as the Laplacian of a directed labeled graph with no self-loops.

For each node $v_{j}$, let $\Theta(v_j)$ be the set of spanning trees of $G$ rooted at $v_{j}$. Let $\mathcal{L}(G)_{(ij)}$ denote the  determinant of the principal minor of $\mathcal{L}(G)$  obtained by removing the $i$-th row and  the $j$-th column of $\mathcal{L}(G)$. Then, by the Matrix-Tree theorem \cite{Tutte-matrixtree}:
$$ \mathcal{L}(G)_{(ij)} = (-1)^{n-1+i+j}  \sum_{\tau \in \Theta(v_j)}  \pi(\tau).$$ 
Note that for notational simplicity we have defined the Laplacian as the transpose of how it is   usually defined and the Matrix-Tree theorem has been adapted consequently. 

\section{Chemical reaction networks}\label{CRN}

We introduce the definition of a CRN and some concepts related to CRNs. See for instance \cite{feinbergnotes,Feinbergss} for extended discussions.

\begin{definition}\label{crn}
A \emph{chemical reaction network} (CRN) consists of three finite sets:
\begin{enumerate}[(1)]
\item A set $\mmS$ of \emph{species}.
\item A set $\mmC\subset \omR_{+}^{\mmS}$ of \emph{complexes}.
\item A set  $\mmR\subset \mmC\times \mmC$ of \emph{reactions}, such that $(y,y)\notin \mmR$ for all $y\in \mmC$, and if $y\in \mmC$, then there exists $y'\in \mmC$ such that either $(y,y')\in \mmR$ or $(y',y)\in \mmR$. 
\end{enumerate}
\end{definition}

Inflow and outflow of species are accommodated in this setting by incorporating the complex $0\in \omR_+^{\mmS}$ and reactions $0\rightarrow  A$, $A\rightarrow0$, respectively \cite{feinberg-horn-open}.

Following the usual convention, an element $r=(y,y')\in \mmR$ is  denoted  by $r\colon y\rightarrow y'$.
For a reaction $r\colon y\rightarrow y'$, the initial and terminal complexes are denoted by  $y(r):=y$ and $y'(r):=y'$, respectively. By definition,   any complex is either the initial or terminal complex of some reaction.  

Let $s$ be the cardinality of $\mmS$.  We fix an order in $\mmS$ so that $\mmS=\{S_1,\dots,S_s\}$ and  identify $\R^{\mmS}$  with $\R^s$. The species $S_i$ is identified with the $i$-th canonical vector of $\R^s$ with $1$ in the $i$-th position and zeroes elsewhere. An element in $\R^s$ is then given as $\sum_{i=1}^s \lambda_i S_i$.  In particular, a complex $y\in \mmC$ is given as $y=\sum_{i=1}^s y_i S_i$ or  $(y_1,\dots,y_s)$. If $r$ is a reaction,  $y_i(r), y'_i(r)$  denote the $i$-th entries of $y(r), y'(r)$ respectively. 

\begin{definition}\label{maindefs} We say:
\begin{enumerate}[(i)]
\item $y_i$ is the \emph{stoichiometric coefficient} of $S_i$ in $y$.
\item If $y_i\neq 0$ for some $i$ and $y\in \mmC$, then $S_i$ \emph{is part of} $y$, $y$ \emph{involves} $S_i$, and $r$ \emph{involves} $S_i$ for any reaction $r$ such that $y(r)=y$ or $y'(r)=y'$.
\item $S_i,S_j\in \mmS$  \emph{interact} if $y_i,y_{j} \neq 0$, $i\not=j$, for some complex $y$.
\item $y\in \mathcal{C}$ \emph{reacts to}  $y'\in\mathcal{C}$ if there is a reaction $y\rightarrow y'$.
\item $y\in \mathcal{C}$ \emph{ultimately reacts to}  $y'\in\mathcal{C} $ (denoted $y\Rightarrow y'$) if there exists a sequence of reactions  $y \rightarrow y^{1} \rightarrow \dots \rightarrow y^{r} \rightarrow y'$ with $y^{m}\in \mathcal{C}$. 
\item $S_i\in \mmS$ \emph{produces} $S_j\in \mmS$  if there exist two complexes $y,y'$ with $y_i\neq 0$, $y'_{j}\neq 0$  and a reaction $y\rightarrow y'$. 
\item  $S_i\in \mmS$ \emph{ultimately produces} $S_j\in \mmS$  if there exist $S_{i_1},\dots,S_{i_r}$ with $S_{i_1}=S_i$, $S_{i_r}=S_j$ and such that $S_{i_{k-1}}$ produces $S_{i_k}$ for $k=2,\dots,r$.
If each $S_{i_k}$ belongs to a  subset $\mmsa\subseteq \mms$ for $k=2,\dots,r-1$, then  $S_i$ \emph{ultimately produces} $S_j$ \emph{via} $\mmsa$.
\end{enumerate}

\end{definition}

If $y$ ultimately reacts to $y'$ then $y$ and $y'$ are \emph{linked}. Being linked generates an equivalence relation and the classes are called  \emph{linkage classes}. Two complexes $y,y'$ are \emph{strongly linked} if both $y\Rightarrow y'$ and $y'\Rightarrow y$. Being strongly linked also defines  an equivalence relation and the classes  are called \emph{strong linkage classes}.

We introduce an example (which we will refer to as the \emph{main example}) that we use to illustrate the definitions and constructions below.
Consider the  CRN with set of species $\mmS=\{S_1,\dots,S_9\}$ and set of complexes $\mmC=\{S_1 + S_2,S_4,S_1+S_3,S_5,S_3+S_4,S_6,S_2+S_5,S_1+S_7,S_7+S_8,S_9,S_2+S_3+S_8\}$,
reacting according to
\begin{center}
\xymatrix@R=10pt@C=10pt{
S_1+ S_2 \ar@<0.4ex>[r] & S_4 \ar@<0.4ex>[l] & S_1+ S_3 \ar@<0.4ex>[r] & S_5 \ar@<0.4ex>[l] & S_3+S_4\ar@<0.4ex>[r] & S_6 \ar[d] \ar@<0.4ex>[l]  \ar@<0.4ex>[r] & S_2+S_5 \ar@<0.4ex>[l]   \\ & S_7+ S_8 \ar@<0.4ex>[r] & S_9 \ar[r] \ar@<0.4ex>[l] & S_2+S_3+S_8 & &  S_1+S_7}
\end{center}
That is, the set of reactions $\mathcal{R}$ consists of
\begin{align*}
r_1\colon & S_1 + S_2 \rightarrow  S_4 & r_2 \colon & S_4 \rightarrow S_1 + S_2 & r_3 \colon & S_1+S_3\rightarrow S_5 
& r_4\colon & S_5 \rightarrow S_1+S_3 \\ 
r_5 \colon & S_3+S_4 \rightarrow S_6 & r_6\colon &  S_6 \rightarrow S_3+S_4   & r_7\colon & S_2 + S_5 \rightarrow S_6 & r_8\colon & S_6\rightarrow S_2+S_5 \\  
  r_9 \colon & S_6 \rightarrow S_1+S_7 &
r_{10}\colon & S_7+S_8 \rightarrow S_9 & r_{11}\colon  & S_9 \rightarrow S_7+S_8  & r_{12} \colon & S_9\rightarrow S_2+S_3+S_8
\end{align*}
This system represents  a two substrate enzyme catalysis with unordered substrate binding \cite{craciun-feinberg-pnas} in which $S_1$ is an enzyme, $S_2,S_3$ are substrates, $S_4,S_5,S_6$ are intermediate enzyme-substrate complexes, and $S_7$ is considered the product of the reaction system. The product dissociates via catalysis by an enzyme $S_8$ and the formation of an intermediate complex $S_9$.
The stoichiometric coefficients of all species that are part of a complex are one. The complex $S_2+S_3+S_8$ involves $S_2,S_3,S_8$ and its vector expression is $(0,1,1,0,0,0,0,1,0)\in \omR_{+}^9$. The species $S_3$ and $S_4$ interact. The complex $S_1+S_3$ reacts to the complex $S_5$, implying that species $S_1$ produces species $S_5$. Also, the complex $S_3+S_4$ ultimately reacts to $S_1+S_7$, and  $S_7+S_8$ ultimately reacts to $S_2+S_3+S_8$. It follows that $S_3$ ultimately produces $S_7$ and $S_8$.

\section{Mass-action kinetics} 
The molar concentration  of species $S_i$ at time $t$ is denoted by $c_i=c_i(t)$. To any complex $y$ we associate a monomial $c^y=\prod_{i=1}^s  c_i^{y_i}$. For example, if $y=(2,1,0,1)\in \omR^4_+$, then the associated monomial is $c^y=c_1^2c_2c_4$. 

We assume that each reaction $r:y\rightarrow y'$ has an associated positive  \emph{rate constant} $k_{y\rightarrow y'}\in \R_+$ (also denoted $k_r$). The set of reactions together with their associated rate constants give rise to a polynomial system of ODEs taken with \emph{mass action kinetics}:
\begin{align}\label{ode}
\dot{c_i} &=\sum_{y\rightarrow y'\in \mmR} k_{y\rightarrow y'} c^y (y_i' -   y_i),\qquad S_i\in \mmS.
\end{align}
These ODEs describe the dynamics of the concentrations $c_i$ in time.
The steady states of the system are the solutions to a system of polynomial  equations in $c_1,\dots,c_s$ obtained by setting the derivatives of the  concentrations to zero:
\begin{align}\label{steadystate}
0 =&\sum_{y\rightarrow y'\in \mmR} k_{y\rightarrow y'} c^y (y_i' -   y_i), \qquad \textrm{for all }i.
\end{align}
These polynomial equations can be written as:
\begin{equation}\label{steadystate2}
 0=\sum_{r\in \mmR} k_{r}  c^{y(r)} y'_i(r)  -\sum_{r\in \mmR} k_{r}c^{y(r)} y_i(r),\qquad \textrm{for all }i. 
 \end{equation}

It is convenient to treat  the rate constants as parameters with unspecified  values, that is as symbols (as we did in the example in the introduction). For that, let 
$$\Con=\{k_{y\rightarrow y'}| y\rightarrow y'\in \mmR \}$$
 be the set of the symbols. Then, the system \eqref{steadystate}  is a system of polynomial  equations in $c_1,\dots,c_s$ with coefficients in the field $\R(\Con)$. 
 
Only  \emph{non-negative solutions} of the steady state equations are biologically or chemically meaningful and we focus on these only. The concept of S-positivity introduced above will be key in what follows.  
Consider the main example and denote by $k_i$ the rate constant of reaction $r_i$. The mass action ODEs are:
\begin{align*}
\dot{c_1} &= -k_1 c_1c_2 + k_2c_4 - k_3c_1c_3+k_4c_5+k_9c_6 & \quad \dot{c_5}&= k_3c_1c_3 - k_4c_5-k_7c_2c_5+k_8c_6 \\
\dot{c_2} &=  -k_1 c_1c_2 + k_2c_4 -k_7c_2c_5+k_8c_6+k_{12}c_9 &  \dot{c_7} &= k_9c_6 - k_{10} c_7c_8 + k_{11}c_9\\
\dot{c_3}&=  - k_3c_1c_3+k_4c_5 - k_5c_3c_4 + k_6 c_6+k_{12}c_9 &  \dot{c_8} &= -k_{10} c_7c_8 + k_{11}c_9 + k_{12}c_9\\
\dot{c_4} &=  k_1 c_1c_2 - k_2c_4 - k_5c_3c_4 + k_6 c_6 &  \dot{c_9} &= k_{10} c_7c_8 - k_{11}c_9 - k_{12}c_9.\\
\dot{c_6} &= k_5c_3c_4 - k_6 c_6 + k_7c_2c_5 - k_8c_6-k_9c_6 
\end{align*}
Take for instance species $S_1$. The only reactions that involve $S_1$ are $r_1,r_2,r_3,r_4,r_9$. $r_1,r_3$ involve $S_1$ in the initial complex and thus the monomials contain $c_1$ and have negative coefficients. Similarly, $r_2,r_4,r_9$ involve $S_1$ only in the terminal complex and thus the monomials do not include $c_1$ and have positive coefficients.

\section{Conservation laws and P-semiflows}
The dynamics of a CRN system might preserve quantities that remain constant  over time. If this is the case, the dynamics  takes place in a proper invariant subspace of $\R^{s}$.  Let $x\cdot x'$ denote the Euclidian scalar product of two vectors $x,x'$.

\begin{definition}
The \emph{stoichiometric subspace}  of a CRN, $(\mmS,\mmC,\mmR)$, is the following subspace of $\R^s$: 
$$\Gamma = \langle y'-y|\, y\rightarrow y' \in \mmR\rangle.   $$
A  \emph{semiflow} is a non-zero vector $\omega=(\lambda_1,\dots,\lambda_s)\in \Gamma^{\perp}$. If $\lambda_i\geq 0$ for all $i$, then $\omega$ is  a \emph{P-semiflow}.
\end{definition}

By the definition of the mass action ODEs, the vector $\dot{c}$ points along the stoichiometric 
subspace $\Gamma$. The \emph{stoichiometric class} of a concentration vector $c\in \omR^s_{+}$ is $\mathcal{C}_{c}=\{c+\Gamma\}\cap\omR_{+}^s$.  In CRNT, two steady states    $c,c'$ are called \emph{stoichiometrically compatible} if $c-c'\in \Gamma$. This is equivalent to $\omega\cdot c = \omega \cdot c'$ for all $\omega\in \Gamma^{\perp}$. 

If $\omega=(\lambda_1,\dots,\lambda_s)\in \Gamma^{\perp}$, then  $\sum_{i=1}^s \lambda_i \dot{c_i}=0$. This implies that the linear combination of  \emph{concentrations} $\sum_{i=1}^s \lambda_i c_i$ is independent of time and thus determined by the initial concentrations of the system. 
In particular, any steady state solution of the system preserves the total initial amounts and lies in a particular coset of $\Gamma$. 

A linear combination  $\sum_{i=1}^s \lambda_ic_i$ that is  independent of time gives rise to an equation, called a \emph{conservation law}, with a fixed \emph{total amount} $\overline{\omega}\in \R_+$:
 \begin{equation} \label{totalamounts}
\overline{\omega} =\sum_{i=1}^s \lambda_i c_i. 
\end{equation}

A basis $\{\omega^1,\dots,\omega^d\}$ of $\Gamma^{\perp}$ gives a set of independent semiflows and thus a set of independent conservation laws: if $\omega^l=\sum_{i=1}^s\lambda_i^lS_i$ and  total amounts $\overline{\omega}^1,\dots,\overline{\omega}^d\in \R_{+}$ are given,  we require the steady state solutions to  satisfy:
$\overline{\omega}^l =\sum_{i=1}^s \lambda_i^l c_i$ for all $l$.

In the main example, the dimension of $\Gamma$ is $5$:
$$\Gamma=\langle S_1+S_2-S_4,S_1+S_3-S_5,S_3+S_4-S_6,S_1+S_7-S_6,S_7+S_8-S_9 \rangle.$$
Thus the space $\Gamma^{\perp}$ has dimension $4$ and a basis is:
\begin{align}\omega^1& =S_1+S_4+S_5+S_6 & \omega^2 &= S_8+S_9 \label{main:conslaws}\\ \omega^3 &= S_2+S_4+S_6+S_7+S_9 & \omega^4 &= S_2+S_3+S_4+S_5+2S_6+2S_7+2S_9.\nonumber
\end{align}
The conservation law corresponding to $\omega^1$ is
$\overline{\omega}^1= c_1+c_4+c_5+c_6$
for a given $\overline{\omega}^1\in \R_+$.

\emph{Remark. } Questions like ``How many steady states does a system possess?'' refer to the number of steady state solutions that fulfill the conservation laws with the same total amounts, or, equivalently, to the number of (stoichiometrically compatible) steady states in each stoichiometric class. If this restriction is not imposed and conservation laws exist, then the steady state solutions form an algebraic variety of dimension at least $1$.

\emph{Remark. } Not all CRNs have semiflows. Consider for instance the  CRN with $s=6$ and reactions
$S_1\rightarrow S_2$ , $S_2\rightarrow  S_3$, $S_1+S_2+S_3 \rightarrow S_6$, $S_4+S_5\rightarrow  S_6$, $S_4\rightarrow  S_5$, and $S_5\rightarrow  S_1$.
The stoichiometric subspace is $\R^6$ and thus $\Gamma^{\perp}=0$. If the last reaction is removed, then the stoichiometric subspace has dimension one and there is one P-semiflow: $\omega=2S_1+2S_2+2S_3+3S_4+3S_5+6S_6$. In general,  a basis for $\Gamma^{\perp}$ consisting of P-semiflows is neither guaranteed.  Consider the following  CRN with $s=3$ and one reaction,  $A+B+C\rightarrow A$. There is not a basis of $\Gamma^{\perp}=\langle A, B-C\rangle$ consisting of P-semiflows alone. This CRN  is not biochemically reasonable.

\begin{lemma}\label{cons}
The following statements are equivalent:
\begin{enumerate}[(i)]
\item The stoichiometric class $\mathcal{C}_c=\{c+\Gamma\}\cap\omR_{+}^s$, $c\in\omR_{+}^s$, is compact.
\item $\Gamma\cap\omR_{+}^s=\{0\}$.
\item $\Gamma^{\perp}$ has a basis $\{\omega^1,\ldots,\omega^d\}$ of P-semiflows with $\lambda^j_i>0$ if $\omega^j=(\lambda_1^j,\ldots,\lambda_s^j)$.
\item There is an element  $\omega=(\lambda_1,\ldots,\lambda_s)$ of $\Gamma^{\perp}$ with $\lambda_i>0$ for all $i$.
\end{enumerate}
\end{lemma}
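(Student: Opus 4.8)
The plan is to prove the four equivalences via the chain (i)$\Leftrightarrow$(ii), (ii)$\Leftrightarrow$(iv), and (iii)$\Leftrightarrow$(iv). The thread running through the argument is the elementary positivity remark: if $\omega=(\lambda_1,\dots,\lambda_s)$ has $\lambda_i>0$ for all $i$ and $x\in\omR_+^s$, then $\omega\cdot x=\sum_i\lambda_i x_i=0$ forces $x=0$. This remark gives (iv)$\Rightarrow$(ii) at once, since any $x\in\Gamma\cap\omR_+^s$ satisfies $\omega\cdot x=0$ for $\omega\in\Gamma^\perp$, and it supplies the easy half of the other two steps as well.

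For (i)$\Leftrightarrow$(ii) I would work with boundedness directly, observing first that $\mathcal{C}_c$ is nonempty (it contains $c$) and closed (it is the intersection of the closed affine space $c+\Gamma$ with the closed orthant $\omR_+^s$), so compactness is equivalent to boundedness. If (ii) fails, choose $0\neq\gamma\in\Gamma\cap\omR_+^s$; then the entire ray $\{c+t\gamma:t\geq 0\}$ lies in $\mathcal{C}_c$ and is unbounded, so (i) fails. Conversely, assuming (ii), if some $\mathcal{C}_c$ were unbounded I would take $x_n\in\mathcal{C}_c$ with $\|x_n\|\to\infty$ and pass to a subsequence with $x_n/\|x_n\|\to v$, $\|v\|=1$. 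Here $v\geq 0$ as a limit of nonnegative vectors, and writing $x_n=c+\gamma_n$ with $\gamma_n\in\Gamma$ gives $x_n/\|x_n\|=c/\|x_n\|+\gamma_n/\|x_n\|$, where the first term vanishes in the limit and $\gamma_n/\|x_n\|\in\Gamma$; since $\Gamma$ is closed, $v\in\Gamma$. Thus $v$ is a nonzero element of $\Gamma\cap\omR_+^s$, contradicting (ii).

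The core of the lemma, and the step I expect to require real work, is (ii)$\Rightarrow$(iv), which is a \emph{theorem of the alternative} of Stiemke--Gordan type. I would derive it from the separating hyperplane theorem applied to the compact convex simplex $\Delta=\{x\in\omR_+^s:\sum_i x_i=1\}$ and the closed convex subspace $\Gamma$. Condition (ii) forces $\Gamma\cap\Delta=\emptyset$ (a common point would be a nonzero element of $\Gamma\cap\omR_+^s$), so strict separation of a compact convex set from a disjoint closed convex set yields $\omega$ with $\sup_{\gamma\in\Gamma}\omega\cdot\gamma<\inf_{d\in\Delta}\omega\cdot d$. Because $\Gamma$ is a subspace, the supremum is finite only if $\omega\cdot\gamma=0$ on all of $\Gamma$, i.e. $\omega\in\Gamma^\perp$ and the supremum equals $0$; then $\inf_{d\in\Delta}\omega\cdot d>0$, and evaluating at each standard basis vector $e_i\in\Delta$ gives $\lambda_i=\omega\cdot e_i>0$. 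This is precisely (iv).

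Finally (iii)$\Leftrightarrow$(iv) is a basis adjustment. One direction is immediate: a strictly positive basis vector witnesses (iv), and one notes that (iv) already rules out $\Gamma=\R^s$, so $\Gamma^\perp\neq\{0\}$. For (iv)$\Rightarrow$(iii) I would use that the set of coordinate-wise positive vectors of $\Gamma^\perp$ is a nonempty open cone in $\Gamma^\perp$; a nonempty open subset of a vector space spans it and hence contains a basis, and every such basis vector is automatically a P-semiflow with all entries positive. Concretely, starting from the $\omega$ of (iv) and any basis $v^1,\dots,v^d$ of $\Gamma^\perp$, the perturbed vectors $\omega+\varepsilon v^j$ remain coordinate-wise positive for small $\varepsilon>0$ and, by a determinant computation, are linearly independent for all but finitely many $\varepsilon$, producing the required basis.
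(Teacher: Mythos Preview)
Your proof is correct. The paper argues via the cycle (i)$\Rightarrow$(ii)$\Rightarrow$(iii)$\Rightarrow$(iv)$\Rightarrow$(i), whereas you establish three biconditionals (i)$\Leftrightarrow$(ii), (ii)$\Leftrightarrow$(iv), (iii)$\Leftrightarrow$(iv). The substantive step in both is the same alternative-type result: from $\Gamma\cap\omR_+^s=\{0\}$ one must produce a strictly positive element of $\Gamma^\perp$. You make this explicit by separating the simplex $\Delta$ from $\Gamma$ and reading off Stiemke's lemma; the paper asserts more tersely that $\Gamma^\perp\cap\omR_+^s$ is not confined to the boundary of the orthant and then picks a basis from the resulting relatively open set in $\Gamma^\perp$. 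Your (iv)$\Rightarrow$(iii) perturbation argument is exactly the same open-set idea spelled out. One genuine difference: the paper closes the loop with (iv)$\Rightarrow$(i) via the one-line bound $x_i\le(\omega\cdot c)/\lambda_i$, avoiding your recession-direction sequence argument for (ii)$\Rightarrow$(i); conversely, your route gives (ii)$\Rightarrow$(i) directly without passing through (iv). Both approaches are standard and of comparable length; yours is slightly more self-contained about the separation step.
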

\begin{proof} We will prove (i)$\Rightarrow$(ii)$\Rightarrow$(iii)$\Rightarrow$(iv)$\Rightarrow$(i). Assume that $\mathcal{C}_c$  is compact and consider $V:=\Gamma\cap\omR_{+}^s$. If $V\not=\{0\}$, then for any non-zero $v\in V$, the set  $c+\langle v\rangle$ is unbounded in $\omR_{+}^s$. Hence $\mathcal{C}_c$ cannot be compact and thus (ii) must be the case.  If (ii), then $\Gamma^{\perp}\cap\omR_{+}^s\not=\{0\}$ and further $\Gamma^{\perp}\cap\omR_{+}^s\not\subseteq{\rm bd}(\omR_{+}^s)$.  If the latter was not the case, then also $\Gamma\cap{\rm bd}(\omR_{+}^s)\not=\{0\}$, contradicting (ii). Hence, there exists an open set $\Omega\subseteq\Gamma^{\perp}\cap\R_{+}^s$ in $\Gamma^{\perp}$,   and we can choose a basis $\{\omega^1,\ldots,\omega^d\}$ of P-semiflows with $\omega^j\in\Omega$, that is, $\lambda_i^j>0$. Thus (iii) is fulfilled. (iii) gives (iv) directly. Assume (iv). For $x=(x_i)_i\in\mathcal{C}_c$,  $\omega\cdot x=\omega\cdot c$ is independent of $x$. Since $\lambda_i>0$ and $x_i\geq 0$,  $x_i\leq (\omega\cdot c)/\lambda_i$ for all $i$ and thus $\mathcal{C}_c$ is bounded. Since it is a closed set, $\mathcal{C}_c$ is compact and (i) is proven. 
\end{proof}

Lemma~\ref{cons} is well-known in dynamical systems theory and Petri Net theory. In the latter semiflows are known as P-invariants (place invariants) \cite{petri}.

\emph{Remark. } All conservation laws might not be obtained from semiflows \cite{feinberg-invariant}, that is, the semiflows in $\Gamma^{\perp}$ might not give the minimal affine space in which the dynamics of the system takes place. There can be additional conservation laws  depending on the rate constants and not merely on the stoichiometric coefficients. The next lemma is proven in \cite{feinberg-invariant} and stated here for future reference:

\begin{lemma}[\cite{feinberg-invariant}, $\S$6] \label{stoichkin}
If each linkage class contains exactly one terminal strong linkage class, then  all conservation laws correspond to semiflows. 
\end{lemma}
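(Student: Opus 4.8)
The plan is to recast both notions---conservation law and semiflow---as kernel conditions on a single matrix acting on the space $\R^{\mmC}$ indexed by complexes, and then to show that the two relevant kernels coincide precisely under the terminal strong linkage class hypothesis. Write the complexes as the columns of a matrix $Y$ of size $s\times|\mmC|$, let $B$ be the incidence matrix of the reaction graph (one column $e_{y'}-e_{y}$ per reaction $y\rightarrow y'$), and let $A$ be the kinetic matrix on $\R^{\mmC}$ with off-diagonal entries $A_{z,z'}=k_{z'\rightarrow z}$ and column sums zero. A direct check shows the mass-action ODE \eqref{ode} takes the form $\dot c = Y A\,\Psi(c)$, where $\Psi(c)=(c^{z})_{z\in\mmC}$. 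Because distinct complexes are distinct integer vectors, the monomials $c^{z}$ are linearly independent functions; hence a vector $\omega$ is a (linear) conservation law, i.e.\ $\omega\cdot\dot c\equiv 0$, if and only if $\omega^{T}YA=0$, that is $A^{T}(Y^{T}\omega)=0$. On the other hand $\Gamma=\mathrm{im}(YB)$, so $\omega$ is a semiflow exactly when $B^{T}(Y^{T}\omega)=0$. Thus the lemma reduces to the purely graph-theoretic claim that $\ker(A^{T})=\ker(B^{T})$ as subspaces of $\R^{\mmC}$ whenever each linkage class contains a single terminal strong linkage class.

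Next I would identify the two kernels. A vector $v\in\R^{\mmC}$ satisfies $B^{T}v=0$ iff $v_{y'}=v_{y}$ for every reaction, i.e.\ iff $v$ is constant on each linkage class; hence $\dim\ker(B^{T})=\ell$, the number of linkage classes. The inclusion $\ker(B^{T})\subseteq\ker(A^{T})$ holds in general: if $v$ is constant on linkage classes then $(A^{T}v)_{z}=\sum_{z\rightarrow z'}k_{z\rightarrow z'}(v_{z'}-v_{z})=0$, because every out-neighbour $z'$ of $z$ lies in the same linkage class as $z$. This already yields the easy inclusion---every semiflow is a conservation law---and reduces the lemma to an equality of dimensions.

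The heart of the argument is the identity $\dim\ker(A^{T})=t$, where $t$ is the number of terminal strong linkage classes; since $\dim\ker(A^{T})=\dim\ker(A)$, this is the standard structural description of $\ker(A)$. I would obtain it from the Matrix-Tree theorem: $A$ is block diagonal over the linkage classes, and within a class the tree-constant vectors $\pi$ attached by the Matrix-Tree theorem to the nodes of each terminal strong linkage class furnish, for each such class, a kernel vector of $A$ supported exactly on that class with strictly positive entries. These vectors have pairwise disjoint supports, hence are independent, and a rank count shows they span $\ker(A)$; therefore $\dim\ker(A)=t$. Equivalently, $A^{T}$ is the generator of the Markov chain that jumps $z\rightarrow z'$ at rate $k_{z\rightarrow z'}$, and its harmonic vectors are exactly those determined by their values on the closed (terminal) classes, of which there are $t$.

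Finally I would combine the pieces. The hypothesis that each linkage class contains exactly one terminal strong linkage class says precisely $t=\ell$. Together with $\ker(B^{T})\subseteq\ker(A^{T})$ and $\dim\ker(B^{T})=\ell=t=\dim\ker(A^{T})$, this forces $\ker(B^{T})=\ker(A^{T})$; and since both conditions on $\omega$ then read $Y^{T}\omega\in\ker(A^{T})=\ker(B^{T})$, every conservation law is a semiflow. The main obstacle is the middle step, $\dim\ker(A^{T})=t$: proving that the terminal-class tree-constant vectors exhaust the kernel requires the full decomposition of the reaction graph into strong linkage classes together with the positivity supplied by the Matrix-Tree theorem. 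It is exactly here that a linkage class with two or more terminal strong linkage classes would produce extra kernel vectors---and hence genuinely rate-dependent conservation laws that are \emph{not} semiflows---showing the hypothesis is the sharp condition.
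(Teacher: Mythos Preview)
The paper does not supply its own proof of this lemma; it is stated ``for future reference'' and attributed to Feinberg--Horn \cite{feinberg-invariant}. Your argument is correct and is, in fact, precisely the approach taken in that reference: one factors the mass-action vector field as $YA\Psi(c)$, uses linear independence of the monomials $c^{z}$ to identify conservation laws with $\ker(A^{T})$ pulled back along $Y^{T}$, identifies semiflows with $\ker(B^{T})$ pulled back along $Y^{T}$, and then invokes the structural identity $\dim\ker(A)=t$ together with $\dim\ker(B^{T})=\ell$ and the easy inclusion $\ker(B^{T})\subseteq\ker(A^{T})$.

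One small point worth making explicit in your write-up: the reduction to $\ker(A^{T})=\ker(B^{T})$ is stronger than strictly needed (it suffices that the two kernels agree on $\mathrm{im}(Y^{T})$), but proving the full equality is cleaner and is what Feinberg--Horn do. The step you correctly flag as the crux, $\dim\ker(A)=t$, is established in \cite{feinberg-invariant} by reducing to a single linkage class and showing that the Kirchhoff (tree-constant) vectors supported on the terminal strong linkage classes span the kernel of the restricted block; the Matrix-Tree theorem supplies the required positivity, exactly as you outline.
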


As shown in \cite{feinberg-invariant}, any weakly reversible network fulfills the condition of the lemma. 
Also, the main example fulfills the criterion. The CRN  with reactions 
$r_1\colon S_1\rightarrow S_2$, $r_2\colon S_1\rightarrow S_3$ and $r_3\colon S_2+S_3\rightarrow 2S_1$ does not fulfill it  \cite{craciun-feinberg}. Here, $\Gamma^{\perp}=\langle S_1+S_2+S_3\rangle$ providing the conservation law $c_1+c_2+c_3=\overline{\omega}$. However, when $k_1=k_2=k_3$, then $c_1+2c_2$ is also conserved.

\medskip
{\bf Minimal and terminal semiflows.}

\begin{definition}\label{minimal} The \emph{support} of a semiflow $\omega=(\lambda_1,\dots,\lambda_s)$ is the set $\mmS(\omega)=\{S_i|\ \lambda_i\neq 0\}$. We say that  $\omega$ is 
\begin{enumerate}[(i)]
\item  \emph{minimal} if for any semiflow $\widetilde{\omega}$ with $\mms(\widetilde{\omega})\subseteq\mms(\omega)$, there is $a\in\R$  such that $a\widetilde{\omega}=\omega$. 
\item \emph{terminal} if any semiflow $\widetilde{\omega}$ with $\mms(\widetilde{\omega})\subseteq \mms(\omega)$ satisfies $\mmS(\widetilde{\omega})=\mmS(\omega)$.
\end{enumerate}
 \end{definition}
 
That is, a semiflow $\omega$ is minimal if  any semiflow given by a linear combination of the species in its support  is a multiple of $\omega$ and terminal if there is no semiflow with smaller support.

\begin{lemma}\label{terminal} 
\begin{enumerate}[(i)]
\item A semiflow is minimal if and only  if it is terminal. 
\item If $\omega$ is a P-semiflow that is not minimal, then  there is a P-semiflow $\widetilde\omega$ such that $\mms(\widetilde{\omega})\subsetneq \mms(\omega)$.
\end{enumerate}
\end{lemma}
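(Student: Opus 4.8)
The plan is to prove (i) directly from the definitions and then obtain (ii) by combining (i) with a ``parameter sliding'' argument along a line of semiflows.

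For (i), the implication minimal $\Rightarrow$ terminal is immediate. If $\widetilde\omega$ is a semiflow with $\mms(\widetilde\omega)\subseteq\mms(\omega)$, then minimality yields $a\in\R$ with $a\widetilde\omega=\omega$; since $\omega\neq 0$ we have $a\neq 0$, and scaling by a nonzero constant preserves support, so $\mms(\widetilde\omega)=\mms(\omega)$. For the converse, suppose $\omega=(\lambda_i)_i$ is terminal and let $\widetilde\omega=(\widetilde\lambda_i)_i$ be a semiflow with $\mms(\widetilde\omega)\subseteq\mms(\omega)$. Terminality forces $\mms(\widetilde\omega)=\mms(\omega)$. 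Fix any $S_{i_0}\in\mms(\omega)$, so that $\lambda_{i_0}\neq 0$ and $\widetilde\lambda_{i_0}\neq 0$, and set $\omega'=\lambda_{i_0}\widetilde\omega-\widetilde\lambda_{i_0}\omega$. Then $\omega'\in\Gamma^{\perp}$ and its $i_0$-th coordinate vanishes, so $\mms(\omega')\subseteq\mms(\omega)\setminus\{S_{i_0}\}\subsetneq\mms(\omega)$. If $\omega'\neq 0$ it would be a semiflow violating terminality, so $\omega'=0$, giving $\widetilde\omega=(\widetilde\lambda_{i_0}/\lambda_{i_0})\omega$. Hence $\omega$ is minimal.

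For (ii), suppose the P-semiflow $\omega=(\lambda_i)_i$ is not minimal. By (i) it is not terminal, so there is a semiflow $\widetilde\omega=(\widetilde\lambda_i)_i$ with $\mms(\widetilde\omega)\subsetneq\mms(\omega)$; crucially, $\widetilde\omega$ need not be a P-semiflow. Replacing $\widetilde\omega$ by $-\widetilde\omega$ if necessary, assume $\widetilde\lambda_j>0$ for some $j$, and consider the line of semiflows $\omega_t=\omega-t\widetilde\omega\in\Gamma^{\perp}$ for $t\geq 0$. Since $\mms(\widetilde\omega)\subseteq\mms(\omega)$ and $\omega$ is a P-semiflow, every index $i$ with $\widetilde\lambda_i\neq 0$ has $\lambda_i>0$; consequently, as $t$ grows from $0$, the coordinates of $\omega_t$ outside $\mms(\omega)$ stay zero, those with $\widetilde\lambda_i\leq 0$ stay non-negative, and those with $\widetilde\lambda_i>0$ equal $\lambda_i-t\widetilde\lambda_i$. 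Setting $t^{*}=\min\{\lambda_i/\widetilde\lambda_i : \widetilde\lambda_i>0\}>0$, the vector $\omega_{t^{*}}$ has all coordinates $\geq 0$ while the minimizing coordinate vanishes although it was positive in $\omega$, so $\mms(\omega_{t^{*}})\subsetneq\mms(\omega)$. Finally $\omega_{t^{*}}\neq 0$, since $\omega=t^{*}\widetilde\omega$ is impossible because the two vectors have different supports. Thus $\omega_{t^{*}}$ is the required P-semiflow.

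The only subtle point is (ii): part (i) supplies a semiflow of strictly smaller support but no sign information, so the essential step is the sliding argument that trades this arbitrary semiflow for a genuinely non-negative one. The argument rests entirely on the containment $\mms(\widetilde\omega)\subseteq\mms(\omega)$, which guarantees $\lambda_i>0$ wherever $\widetilde\lambda_i\neq 0$ and hence that moving in the direction $-\widetilde\omega$ keeps every coordinate non-negative until the first one reaches zero. I expect the main care to lie in the case analysis on the sign of $\widetilde\lambda_i$ and in verifying $\omega_{t^{*}}\neq 0$, both of which reduce to the proper containment of supports.
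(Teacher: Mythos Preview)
Your proof is correct and follows essentially the same strategy as the paper's: for (i) both arguments form a linear combination of $\omega$ and $\widetilde\omega$ that kills one coordinate, and for (ii) both pick the right scalar so that the resulting combination is nonnegative with strictly smaller support. The only cosmetic difference is that the paper selects the index $u$ maximizing $|\widetilde\lambda_u/\lambda_u|$ and forms $\widehat\omega=\widetilde\lambda_u\omega-\lambda_u\widetilde\omega$ directly, whereas you phrase the same step geometrically as sliding $\omega-t\widetilde\omega$ until it hits the boundary of the nonnegative orthant at $t^*=\min\{\lambda_i/\widetilde\lambda_i:\widetilde\lambda_i>0\}$; up to a positive scalar and the preliminary sign normalization of $\widetilde\omega$, these produce the same vector.
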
 
\begin{proof} (i) If $\omega$ is a minimal semiflow then by definition any semiflow $\widetilde\omega$ with $\mms(\widetilde{\omega})\subseteq \mms(\omega)$ satisfies $\widetilde\omega=a\omega$ for some $a\in \R$. Thus, $\mms(\widetilde\omega)=\mms(\omega)$, which implies that $\omega$ is terminal.  
To prove the reverse, assume that $\omega$  is terminal but not minimal, that is, there exists $\widetilde{\omega}$ such that $\mms(\widetilde{\omega})=\mms(\omega)$ and $\widetilde{\omega}\neq a\omega$ for all $a\in \R$.  Let $\mathcal{I}=\{i|S_i\in \mms(\widetilde{\omega})\}$, $\omega=(\lambda_1,\dots,\lambda_s)$, and $\widetilde\omega=(\widetilde\lambda_1,\dots,\widetilde\lambda_s)$. Choose $u\in \mathcal{I}$ such that $|\widetilde{\lambda}_u/\lambda_u|\geq |\widetilde{\lambda}_i/\lambda_i|$ for all $i\in \mathcal{I}$ and define $\gamma=\widetilde{\lambda}_u$ and $\widetilde\gamma=\lambda_u$.  Then
$$\widehat{\omega}:=\gamma\omega - \widetilde\gamma\widetilde \omega = \sum_{i=1}^s (\widetilde{\lambda}_u\lambda_i-\lambda_u\widetilde{\lambda}_i)S_i=\sum_{i=1}^s\mu_i S_i$$ is a semiflow, since $\widehat{\omega}\neq 0$ (otherwise $\widetilde{\omega}= a\omega$ for some $a$). Since  $\mu_u=0$, $\mms(\widehat{\omega})\subsetneq\mms(\omega)$, which contradicts that $\omega$ is terminal. 

(ii)  If $\omega$ is a P-semiflow that is not minimal, then there exists a semiflow $\widetilde{\omega}$ such that $\mms(\widetilde{\omega})\subsetneq \mms(\omega)$. The construction above provides a new semiflow $\widehat{\omega}$. Since  $\lambda_i> 0$ for all $i\in \mathcal{I}$, we have $|\widetilde{\lambda}_u|\lambda_i-\lambda_u|\widetilde{\lambda}_i|\geq 0$ and either $\widetilde{\lambda}_u>0$ and $\mu_i\geq 0$ for all $i$, or  $\widetilde{\lambda}_u<0$ and $\mu_i\leq 0$ for all $i$. Hence,  either $\widehat{\omega}$ or $-\widehat{\omega}$ is a P-semiflow fulfilling (ii).
\end{proof}

Therefore, there cannot exist two  linearly independent minimal P-semiflows with the same support. For example, if $S_1+S_2+S_3$ is conserved and minimal, then $\lambda_1S_1+\lambda_2S_2+\lambda_3S_3$ with $\lambda_1\neq \lambda_2$ cannot be a semiflow. 
We will see below that the P-semiflows $\omega^1,\omega^2,\omega^3$ in \eqref{main:conslaws} of the main example are  minimal. However, $\omega^4$ is not minimal since $\mmS(\omega^3)\subsetneq \mmS(\omega^4)$.

 The species graph does not characterize the CRN uniquely, since information coming from the stoichiometric coefficients is ignored. For instance, the following two systems have the same species graph:
  \begin{equation}
R_1=\{A+B\rightarrow  2C, C\rightarrow A\},\qquad R_2= \{A+B\rightarrow  C, C\rightarrow A\}. \label{system2}
\end{equation}

\section{Species graph} Given a CRN $(\mmS,\mmC,\mmR)$, we define the \emph{species graph} $G_{\mmS}$ as the labeled directed  graph with node set $\mmS$ and a directed edge from  $S_i$ to $S_j$ with label $r\colon y\rightarrow y'$ whenever $y_i\neq0$ and $y'_{j}\neq 0$.  That is, there is a directed edge from $S_i$ to $S_j$ if and only if $S_i$ produces $S_j$. There can be multiple edges with different labels between a pair of nodes. In addition,   if $S_i$ is involved in the initial and the terminal complexes of a reaction, then there is a self-edge $S_i\rightarrow S_i$.
The species graph of the main example is depicted in Figure \ref{main:graph}.

\emph{Remark. } A reaction $r\colon y\rightarrow y'$ is called \emph{reversible} if the reaction $y'\rightarrow y$ also exists.  
In the main example, all reactions but $r_9,r_{12}$ are reversible.
In contrast to other papers \cite{angelisontag2,TG-rational},  we consider  reversible reactions as two (independent) irreversible reactions. Thus, reversible reactions provide two edges with opposite directions and different labels   in the species graph. This is required when we consider spanning trees in Section \ref{sec:elim}. Changing a reaction from being reversible to irreversible does not change the stoichiometric subspace and  a system with all reactions considered  irreversible  has the same (P-)semiflows as a system with some (all) reactions considered reversible. However,  the steady states might depend on whether reactions are reversible or not.

\begin{figure}[!t]
\centering
\includegraphics[]{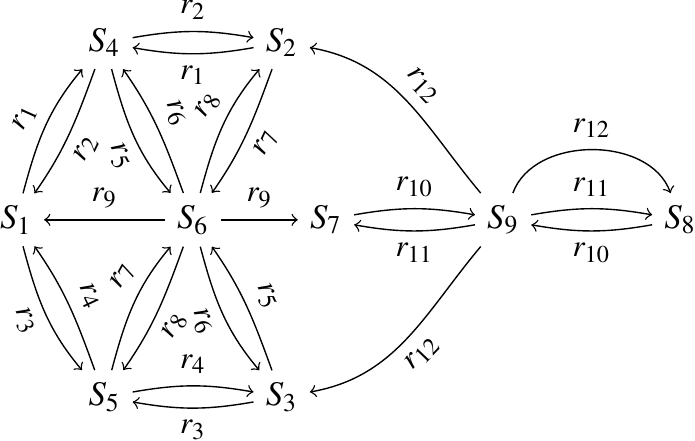}
\caption{Species graph of the main example.} \label{main:graph}
\end{figure}

\begin{definition} 
A graph $G$ with node set $\mmsa$ is a \emph{subgraph} of $G_{\mmS}$ if $\mmsa\subseteq \mmS$
and the labeled directed edges  of $G$ are inherited from $G_{\mmS}$. We denote $G=G_{\mmsa}$.
Further,  
\begin{enumerate}[(i)]
\item  $\mmsa$  is  \emph{full} if any reaction involving $S_i\in \mmsa$ appears at least once as a label of an edge in $G_{\mmsa}$. If this is the case, then  $G_{\mmsa}$ is said to be \emph{full}.
\item  $\mmsa$ is  \emph{non-interacting} if it contains no pair of interacting species  and all stoichiometric coefficients are  either $0$ or $1$, that is, $y_i=0,1$ for all $S_i\in \mmsa$ and $y\in \mmC$. If this is the case, then  $G_{\mmsa}$ is said to be \emph{non-interacting}.
\item If  $\mmsa$ is full and non-interacting, then $\mmS_{\alpha}$ is  a \emph{cut} of $\mmS$.
\end{enumerate}
\end{definition}

The definition of subgraphs of $G_{\mms}$ extends to subgraphs of $G_{\mmsa}$.
We depict in Figure \ref{main:subgraphs} four different subgraphs of the species graph of the main example, corresponding to four different subsets $\mmS_{\alpha}\subseteq \mmS$.

The proof of  the following lemma is left to the reader.
\begin{lemma}\label{snit}  Let $\mms_1,\mms_2\subseteq \mms$. Then
\begin{enumerate}[(i)]
\item  If $G_{\mms_1}$ and $G_{\mms_2}$ are full, then so is $G_{\mms_1\cup \mms_2}$.
\item If $G_{\mms_1\cup \mms_2}$ is  non-interacting, then so are $G_{\mms_1}$ and $G_{\mms_2}$.
\end{enumerate}
If $G_{\mms_1}\cap G_{\mms_2}=\emptyset$ and $G_{\mms_1\cup \mms_2}=G_{\mms_1}\cup G_{\mms_2}$, then the reverse statements are also true.
\end{lemma}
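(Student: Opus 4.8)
The plan is to read the two statements of the lemma as monotonicity properties and the two extra hypotheses as structural constraints on how $\mms_1$ and $\mms_2$ sit inside the species graph. First I would record the translations: $G_{\mms_1}\cap G_{\mms_2}=\emptyset$ says the node sets are disjoint, $\mms_1\cap\mms_2=\emptyset$, while $G_{\mms_1\cup\mms_2}=G_{\mms_1}\cup G_{\mms_2}$ says that the subgraph on $\mms_1\cup\mms_2$ has no edge with one endpoint in $\mms_1$ and the other in $\mms_2$ (no ``cross edges''); equivalently, every edge of $G_{\mms_1\cup\mms_2}$ lies entirely in $\mms_1$ or entirely in $\mms_2$. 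Throughout I would use that $\mms_1,\mms_2\subseteq\mms_1\cup\mms_2$, so each of $G_{\mms_1},G_{\mms_2}$ is a subgraph of $G_{\mms_1\cup\mms_2}$, its edges being inherited from $G_{\mms}$.

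The two forward implications need no hypotheses and are the easy part. For (i), let $r$ be a reaction involving some $S_i\in\mms_1\cup\mms_2$; then $S_i$ lies in $\mms_1$ or in $\mms_2$, say $\mms_1$, and fullness of $G_{\mms_1}$ produces an edge of $G_{\mms_1}$ labelled $r$, which is inherited by $G_{\mms_1\cup\mms_2}$, so $G_{\mms_1\cup\mms_2}$ is full. For (ii), the two defining conditions of non-interaction (no interacting pair; all stoichiometric coefficients $0$ or $1$ on the relevant species) are hereditary: any witness to a failure for $\mms_1$ or for $\mms_2$ is a fortiori a witness for $\mms_1\cup\mms_2$, so non-interaction of the union forces non-interaction of each part.

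For the reverse of (i) I would argue by contradiction using the no-cross-edge hypothesis. Take $r\colon y\rightarrow y'$ involving $S_i\in\mms_1$, so $y_i\neq0$ or $y'_i\neq0$. Fullness of $G_{\mms_1\cup\mms_2}$ gives an edge $S_j\rightarrow S_k$ labelled $r$ inside $\mms_1\cup\mms_2$, with $y_j\neq0$ and $y'_k\neq0$; by the hypothesis this edge lies entirely in $\mms_1$ or entirely in $\mms_2$. If it lay in $\mms_2$, then combining it with $S_i$ yields a cross edge: if $y_i\neq0$ the pair $(S_i,S_k)$ gives $S_i\rightarrow S_k$ with $S_i\in\mms_1$, $S_k\in\mms_2$; if instead $y'_i\neq0$ the pair $(S_j,S_i)$ gives $S_j\rightarrow S_i$ with $S_j\in\mms_2$, $S_i\in\mms_1$. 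Either way this contradicts $G_{\mms_1\cup\mms_2}=G_{\mms_1}\cup G_{\mms_2}$, so the edge labelled $r$ lies in $\mms_1$ and $G_{\mms_1}$ is full; symmetrically for $G_{\mms_2}$.

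The reverse of (ii) is where I expect the real difficulty. The coefficient condition transfers for free, so everything reduces to showing that no $S_i\in\mms_1$ and $S_j\in\mms_2$ interact. The subtlety is that interaction is a complex-level relation --- $S_i$ and $S_j$ appearing in a common complex $y$ --- whereas the species graph records only the ``produces'' relation from an initial complex to a terminal complex, so two species sharing a complex need not be joined by any edge. Hence the no-cross-edge hypothesis cannot be invoked directly, and this is the crux. The plan is to exploit that the shared complex $y$ is the source or target of some reaction and to follow the edges this reaction induces from $S_i$ and from $S_j$, aiming to produce a cross edge between $\mms_1$ and $\mms_2$ and thereby contradict the hypothesis. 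Making this step go through --- rather than the two routine forward directions or the reverse of (i) --- is the part that requires the full strength of the structural hypotheses and the most care, precisely because complex-level interaction must be bridged to edge-level information.
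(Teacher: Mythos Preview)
The paper does not prove this lemma (it is ``left to the reader''), so there is no reference argument to compare against. Your treatment of the forward directions (i) and (ii) and of the converse of (i) is correct and clean; the cross-edge argument for the converse of (i) is exactly the right mechanism.

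Your instinct that the converse of (ii) is the delicate point is well founded---so well founded, in fact, that the converse of (ii) is \emph{false} as stated, and the plan you sketch cannot be completed. Take the CRN with a single reaction $r\colon S_1+S_2\to S_3$, and let $\mms_1=\{S_1\}$, $\mms_2=\{S_2\}$. Each of $G_{\mms_1}$, $G_{\mms_2}$, and $G_{\mms_1\cup\mms_2}$ has no edges at all (every edge of $G_{\mms}$ labelled $r$ has target $S_3\notin\mms_1\cup\mms_2$), so the structural hypotheses $G_{\mms_1}\cap G_{\mms_2}=\emptyset$ and $G_{\mms_1\cup\mms_2}=G_{\mms_1}\cup G_{\mms_2}$ hold trivially. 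Both $\mms_1$ and $\mms_2$ are non-interacting, yet $\mms_1\cup\mms_2=\{S_1,S_2\}$ is not, since $S_1$ and $S_2$ share the complex $S_1+S_2$. This is precisely the obstruction you identified: interaction is a complex-level relation, and when the shared complex reacts only to species outside $\mms_1\cup\mms_2$, no edge---let alone a cross edge---is produced inside $G_{\mms_1\cup\mms_2}$, so the no-cross-edge hypothesis gives you nothing to work with.

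It is worth noting that the paper's two applications of the lemma (showing that connected components of a cut are cuts, and the decomposition remark before Section~\ref{sec:elim}) only use the forward direction of (ii) together with the converse of (i), both of which you have proved correctly. So the defect in the converse of (ii) does not propagate.
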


It follows that if $\mmsa$ is a cut, then the node set of  any connected component of $G_{\mmsa}$ is  also a cut (as illustrated in Figure \ref{main:subgraphs}(a)). The next lemma connects some properties of full and non-interacting graphs that will be used in the sequel.  A non-empty subgraph $G_{\mmsa'}$ of $G_{\mmsa}$ is \emph{proper} if $G_{\mmsa'}\neq G_{\mmsa}$.

\begin{lemma}\label{cut} Let $\mmsa\subseteq \mmS$ be a subset.
\begin{enumerate}[(i)]
\item If $G_{\mmsa}$ is non-interacting, then any reaction label $r$ appears at most once in $G_{\mms_{\alpha}}$.
\item If $G_{\mmsa}$ is non-interacting and $S_i\in \mmsa$ is involved in a reaction $r$ that is a label of an edge of $G_{\mmsa}$, then the edge is to/from $S_i$.
\item If $G_{\mmsa}$ is full and $S_i\in \mmsa$ is involved in a reaction $r$, then there is an edge to/from $S_i$ labeled  $r$.
\item  If $G_{\mmsa}$ is full, has no repeated reaction labels and the stoichiometric coefficients of its nodes in all complexes are either $0$ or $1$, then $\mmsa$ is a cut.
\item If $G_{\mmsa}$ has no repeated reaction labels and $G_{\mmsa}$ is connected, then $G_{\mmsa}$ has no proper full subgraphs.
\end{enumerate}
\end{lemma}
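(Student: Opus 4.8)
The plan is to prove the five parts in order, since (iv) and (v) combine the preceding ones. The engine throughout is the definition of an edge together with the notion of interaction: a label $r\colon y\to y'$ decorates an edge $S_i\to S_j$ of $G_{\mmsa}$ exactly when $y_i\neq 0$ and $y'_j\neq 0$ with $S_i,S_j\in\mmsa$, so having two distinct species of $\mmsa$ with nonzero coefficient in one common complex is precisely what non-interaction forbids. For (i), I would suppose $r\colon y\to y'$ labels two distinct edges $S_i\to S_j$ and $S_{i'}\to S_{j'}$; distinctness forces $i\neq i'$ or $j\neq j'$, and in the first case $y_i,y_{i'}\neq 0$ makes $S_i,S_{i'}$ interact in $y$, in the second $y'_j,y'_{j'}\neq 0$ makes $S_j,S_{j'}$ interact in $y'$, either way contradicting non-interaction. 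For (ii), part (i) already gives that $r$ sits on a unique edge $S_p\to S_q$; if $S_i$ is involved in $r$ through $y_i\neq 0$ then $p=i$ (otherwise $S_i,S_p$ interact in $y$), and through $y'_i\neq 0$ then $q=i$, so the edge is incident to $S_i$.

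For (iii), fullness guarantees that $r$ labels at least one edge $S_p\to S_q$ of $G_{\mmsa}$, which records that $\mmsa$ contains a species $S_p$ with nonzero coefficient in $y$ and a species $S_q$ with nonzero coefficient in $y'$. If $S_i$ is involved through $y_i\neq 0$, then $S_i\to S_q$ is an edge labeled $r$ leaving $S_i$; if through $y'_i\neq 0$, then $S_p\to S_i$ is one entering $S_i$. I would record this sharpened, directional conclusion (an edge out of $S_i$ when $S_i$ lies in the initial complex, into $S_i$ when it lies in the terminal complex), since it is exactly what part (iv) needs.

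For (iv), fullness and the $0/1$-coefficient hypothesis are given, so only the absence of interacting pairs in $\mmsa$ remains. I would suppose $S_i,S_j\in\mmsa$, $i\neq j$, have nonzero coefficients in a common complex $y$; by the definition of a CRN, $y$ is the initial or the terminal complex of some reaction $r$. If $y$ is initial, the directional form of (iii) yields two edges labeled $r$ leaving $S_i$ and $S_j$ respectively, which are distinct because their sources differ, contradicting the no-repeated-label hypothesis; if $y$ is terminal, two distinct edges labeled $r$ enter $S_i$ and $S_j$. Hence $\mmsa$ is non-interacting, and with fullness it is a cut. I expect this to be the main obstacle: the plain ``to/from $S_i$'' form of (iii) is too weak here, because a single edge $S_i\to S_j$ is incident to both endpoints, so the directional refinement is what forces two genuinely different edges and hence the contradiction.

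For (v), I would argue by contradiction. A proper full subgraph has node set $\mmsa'$ with $\emptyset\neq\mmsa'\subsetneq\mmsa$, and connectedness of $G_{\mmsa}$ provides an edge $e$ of $G_{\mmsa}$, say with label $r$, joining $\mmsa'$ to $\mmsa\setminus\mmsa'$ (in particular $e$ is not a self-edge). Its endpoint $S_i\in\mmsa'$ is involved in $r$, so fullness of $G_{\mmsa'}$ places $r$ on an edge $e'$ lying entirely within $\mmsa'$; since $e$ leaves $\mmsa'$ while $e'$ does not, $e\neq e'$, giving two edges of $G_{\mmsa}$ labeled $r$ and contradicting the no-repeated-label hypothesis.
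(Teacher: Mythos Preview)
Your proof is correct and follows essentially the same approach as the paper's: each part hinges on the observation that two distinct species of $\mmsa$ appearing in the same complex is exactly what non-interaction forbids, and (iv), (v) are obtained by producing two edges with the same label. Your explicit directional refinement of (iii) (out-edge when $S_i$ is in the initial complex, in-edge when in the terminal one) is a slight improvement in clarity over the paper, which re-derives this inside the proof of (iv) rather than stating it separately, but the underlying argument is identical.
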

\begin{proof} (i)  Assume that a reaction $r$ appears in two different edges $S_i\xrightarrow{r} S_j$ and $S_u\xrightarrow{r}S_v$ of $G_{\mmsa}$.
If  $S_i\neq S_u$ or $S_j\neq S_v$, then either $S_i$ and $S_u$  or $S_j$ and $S_v$ interact and thus $G_{\mmsa}$ is not non-interacting. 

(ii)-(iii) In both cases there is an edge in $G_{\mmsa}$ labeled $r$: In (ii) by assumption and in (iii) because $G_{\mmsa}$ is full.  Assume that the edge with label $r$ is not from/to $S_i$ but between $S_j,S_u\in \mmsa$, $j,u\neq i$. Then,  $S_i$ and $S_j$ (or $S_u$) interact reaching a contradiction in case (ii) and implying that there is an edge with label $r$ between $S_i$ and $S_u$ (or $S_j$) in case (iii).

(iv) Assume that there are two (different) interacting nodes $S_i,S_j\in \mmsa$. Then there exists a reaction $r\colon y\rightarrow y'$ such that   $y_i=y_j=1$ or $y'_i=y'_j=1$. Let us assume that $y_i=y_j=1$ and the other case follows by symmetry. Since $G_{\mmsa}$ is full, $r$ is the label of an edge in $G_{\mmsa}$. Let $S_u$ (potentially equal to $S_i$ or $S_j$) be the end node of the edge. Since $S_i,S_j,S_u\in \mmsa$, the edges $S_i\xrightarrow{r} S_u$ and $S_j\xrightarrow{r} S_u$ are in  $G_{\mmsa}$, contradicting that there are no repeated labels. Hence, $\mmsa$ is non-interacting and hence a cut.

 (v) Assume that there is a proper subgraph $G$.  Since $G_{\mmsa}$ is connected one can find a node $S_i$ in $G_{\mmsa}$ that is not in $G$, and a node $S_j$ in $G$ for which there is an edge $S_i\xrightarrow{r} S_j$ or $S_j\xrightarrow{r} S_i$ in $G_{\mmsa}$.  By assumption a label appears at most  once in $G_{\mmsa}$. Thus $G$ is not full since $r$ is not a label of an edge in $G$, but $r$ involves $S_j\in G$. 
\end{proof}

It follows from  Lemma~\ref{cut}(i,iii) that if $\mmsa$ is a cut, then all reactions involving $S_i \in \mmsa$ are edges of $G_{\mmsa}$ to/from $S_i$ and they appear exactly once. From (i,v) we find that non-interacting connected graphs have no proper full subgraphs. In particular, if $\mmsa$ is a cut such that  $G_{\mmsa}$  is connected, then $G_{\mmsa}$ has no proper full subgraphs.

\begin{figure}[t]
\centering
\includegraphics[]{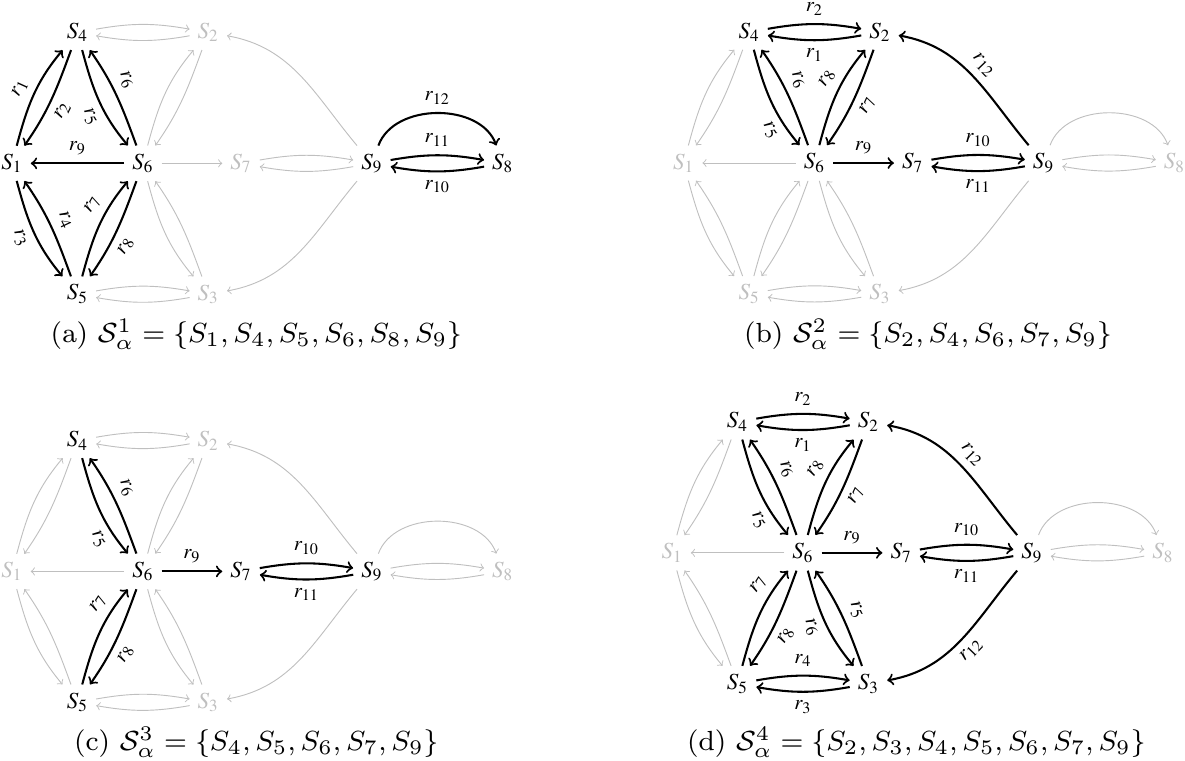}    
\caption{Species subgraphs of the main example. The node sets in (a) and (b) are cuts but the graph in (a) has two connected components and in (b) one; (c) the node set is  non-interacting but not full and cannot be extended to a cut; (d) is a full graph but two of its nodes, $S_2,S_3$ interact. } \label{main:subgraphs}
\end{figure}

\section{Semiflows and the species graph}
In this section we explore the relationship between (P-)semiflows  to full subgraphs of $G_{\mms}$. We come to the main results on semiflows in relation to variable elimination: (1) any semiflow whose support is a cut and the associated graph is connected has all non-zero coefficients equal; (2) the support of a semiflow is non-interacting if and only if it is a cut. 

\begin{lemma}\label{consfull}\label{lemma1.5} Let $\omega$ be a semiflow. 
\begin{enumerate}[(i)]
\item If $\omega$ is minimal, then $G_{\mmS(\omega)}$ is connected.
\item If $\omega$ is a P-semiflow or $\mms(\omega)$ is non-interacting, then $G_{\mms(\omega)}$ is full. 
\item  If  $\omega$ is a P-semiflow or $\mms(\omega)$ is non-interacting, and $G_{\mms(\omega)}$ has no proper full subgraphs, then $\omega$ is minimal. 
\end{enumerate}
\end{lemma}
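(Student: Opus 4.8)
The plan is to prove (ii) first, as it supplies the mechanism used in the other two parts, then obtain (iii) from (ii) together with Lemma~\ref{terminal}, and finally settle (i) by a connected-component decomposition that again relies on the idea behind (ii). Recall that $G_{\mms(\omega)}$ is full exactly when every reaction $r\colon y\to y'$ involving a species of $\mms(\omega)$ has $\mms(\omega)$ containing a species part of $y$ and a species part of $y'$. For (ii) I would argue by contradiction: suppose such a reaction meets $\mms(\omega)$ only in, say, its initial complex $y$ (the terminal case is symmetric), so no species of $\mms(\omega)$ is part of $y'$. Then $\omega\cdot y'=0$, and since $\omega\in\Gamma^{\perp}$ gives $\omega\cdot(y'-y)=0$, also $\omega\cdot y=0$. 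But if $\omega$ is a P-semiflow, $\omega\cdot y=\sum_i\lambda_iy_i$ is a sum of non-negative terms with at least one strictly positive term (from a species of $\mms(\omega)$ part of $y$), so $\omega\cdot y>0$; and if $\mms(\omega)$ is non-interacting, then $y$ contains a unique species $S_i$ of $\mms(\omega)$ with $y_i=1$, so $\omega\cdot y=\lambda_i\neq0$. Either way we reach a contradiction, proving $G_{\mms(\omega)}$ is full.

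For (iii) I would prove the contrapositive. If $\omega$ is not minimal, then by Lemma~\ref{terminal}(i) it is not terminal, so there is a semiflow $\widetilde\omega$ with $\mms(\widetilde\omega)\subsetneq\mms(\omega)$; when $\omega$ is a P-semiflow this $\widetilde\omega$ can be chosen to be a P-semiflow by Lemma~\ref{terminal}(ii), and when $\mms(\omega)$ is non-interacting the subset $\mms(\widetilde\omega)$ is non-interacting by Lemma~\ref{snit}(ii). In both cases $\widetilde\omega$ meets the hypotheses of (ii), so $G_{\mms(\widetilde\omega)}$ is full; since $\mms(\widetilde\omega)$ is a non-empty, strictly smaller node set, $G_{\mms(\widetilde\omega)}$ is a proper full subgraph of $G_{\mms(\omega)}$, contradicting the hypothesis. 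Hence $\omega$ is minimal.

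For (i) I would once more use Lemma~\ref{terminal} and show that disconnectedness produces a semiflow of strictly smaller support. Suppose $G_{\mms(\omega)}$ is disconnected and write $\mms(\omega)=\mms_1\sqcup\mms_2$ with both parts non-empty and no edge between them; split $\omega=\omega_1+\omega_2$ accordingly, so it suffices to show $\omega_1\in\Gamma^{\perp}$ (then $\omega_1$ is a semiflow with support $\subsetneq\mms(\omega)$, contradicting terminality). For a reaction $r\colon y\to y'$, the absence of edges between $\mms_1$ and $\mms_2$ forbids $\mms_1$ from containing a species part of $y$ while $\mms_2$ contains a species part of $y'$, and vice versa. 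When $\mms(\omega)$ meets at most one of $\mms_1,\mms_2$ one checks directly that $\omega_1\cdot(y'-y)$ is either $0$ or equals $\omega\cdot(y'-y)=0$. The main obstacle is the remaining configuration, in which the species of $\mms(\omega)$ involved in $r$ all lie in one complex (all in $y$, or all in $y'$) yet come from both $\mms_1$ and $\mms_2$; such a reaction forces $\omega\cdot y=\omega\cdot y'=0$, and excluding it is exactly the positivity (or non-interacting) computation of part (ii), applied to the single complex that $\mms(\omega)$ meets. Once these reactions are ruled out, $\omega_1\cdot(y'-y)=0$ for every reaction, so $\omega_1$ is a semiflow of smaller support and $G_{\mms(\omega)}$ must be connected.
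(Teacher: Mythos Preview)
Your arguments for (ii) and (iii) are correct and follow the paper's line exactly: a direct contradiction from $\omega\cdot(y'-y)=0$ for (ii), and the contrapositive via Lemma~\ref{terminal} (choosing $\widetilde\omega$ to be a P-semiflow in the P-semiflow case and noting that subsets of non-interacting sets are non-interacting in the other case) for (iii).

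For (i), your component decomposition $\omega=\omega_1+\omega_2$ is also the paper's approach, but you are more careful than the paper at the key step. The paper simply asserts that if $y'_i-y_i\neq 0$ for some $S_i\in\mms_1$ then $y_j=y'_j=0$ for all $S_j\in\mms_2$; you correctly isolate the problematic configuration --- all species of $\mms(\omega)$ involved in $r$ lie in the same complex yet come from both $\mms_1$ and $\mms_2$ --- and note that ruling it out requires the positivity or non-interacting computation from (ii). The trouble is that the hypothesis of (ii) is not assumed in (i), so invoking it here is not legitimate and your proof of (i) is incomplete as a proof of the stated claim. In fact (i) is false for general semiflows: in the one-reaction CRN $S_1+S_2\to S_3$, the vector $\omega=S_1-S_2$ is a minimal semiflow, but $G_{\{S_1,S_2\}}$ has no edges and is disconnected. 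So the paper's unqualified claim in its proof of (i) is genuinely in error, and what you have actually shown (and what the paper needs, since it only ever applies (i) to P-semiflows, e.g.\ in Proposition~\ref{conscut}) is that (i) holds under the additional hypothesis of (ii). You should state this explicitly rather than silently importing that hypothesis.
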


\begin{proof} 
(i) The idea is that if $G_{\mmS(\omega)}$ is not connected, then the semiflow splits into two, contradicting minimality. If $G_{\mmS(\omega)}$ is not connected, then $G_{\mmS(\omega)}=G_1\cup G_2$ with $G_1,G_2$ being two non-empty disjoint subgraphs of $G_{\mmS(\omega)}$. Let $\mmS_1$, $\mmS_2$ be the node sets of $G_1,G_2$, respectively.  If $\omega=\sum_{i=1}^s \lambda_i S_i$, let $\omega_k=\sum_{i| S_i\in \mms_k} \lambda_i S_i\neq 0$, $k=1,2$.  Since $\mmS_1\cap \mmS_2=\emptyset$, $\omega=\omega_1+\omega_2$. By hypothesis,  $\omega\cdot (y'-y)=0$ for all $y\rightarrow y'\in \mmR$ and thus
$0 = \omega_1\cdot (y'-y)+\omega_2\cdot (y'-y)$. Since $G_1,G_2$ are disjoint, there is no edge between a node in $\mmS_1$ and a node in $\mmS_2$. If $y\rightarrow y'$ is a reaction with $y'_i-y_i\neq 0$ for some $S_i\in \mms_1$, then   $y_{j}=y'_{j}=0$ for all $S_j\in \mmS_2$ and trivially $\omega_2\cdot (y'-y)=0$. Thus  $\omega_1\cdot (y'-y)=0$.  By symmetry, we have that  $\omega_1\cdot (y'-y)=\omega_2\cdot (y'-y)=0$ for all reactions $y\rightarrow y'$ and hence $\omega_k$ is a semiflow for $k=1,2$.

(ii) Let $S_i\in \mmS(\omega)$ and $r\colon y\rightarrow y'$ be a reaction with either $y_{i}$ or $y'_{i}\neq 0$.  We assume that $y_{i}\neq 0$ and the other case follows by symmetry. We want to show that   $r$ is a label in $G_{\mmS(\omega)}$, that is,  $y'_{j}\neq 0$ for some $S_j\in \mmS(\omega)$. By hypothesis, $\omega\cdot (y'-y)=0$. If $\omega=(\lambda_1,\ldots,\lambda_s)$ is a P-semiflow (that is, $\lambda_i\geq 0$), we have $\omega\cdot y' = \omega\cdot y\geq \lambda_iy_i>0$. If $\mms(\omega)$ is non-interacting, then $y_j=0$ for any $i\neq j$ such that $S_j\in \mms(\omega)$ and hence $\omega\cdot y' = \omega\cdot y=\lambda_iy_i\neq 0$.
Either case $\lambda_jy_j'\neq 0$ for some $j$ and hence $y_j'\neq 0$ for some $S_j\in \mmS(\omega)$. Note that the case $j=i$ is accepted. 

 (iii) Assume that $\omega$ is not minimal.  Then by Lemma \ref{terminal}(i) there exists a semiflow $\widetilde{\omega}$ such that $\mms(\widetilde\omega)\subsetneq \mms(\omega)$.  If $\omega$ is a P-semiflow, then by Lemma \ref{terminal}(ii) we can assume that $\widetilde\omega$ is a P-semiflow. If $\mms(\omega)$ is non-interacting, then so is $\mms(\widetilde\omega)$.  Using (ii), $G_{\mms(\widetilde\omega)}$ is full and thus a proper full subgraph of $G_{\mms(\omega)}$,  reaching a contradiction.
\end{proof}
 
Consider the P-semiflows $\omega^i$ \eqref{main:conslaws} of the main example and the subsets $\mmsa^i$ in Figure \ref{main:subgraphs}. Here, $\mmS(\omega^1)\cup \mmS(\omega^2)=\mmsa^1$, $\mmS(\omega^3)=\mmsa^2$ and $\mms(\omega^4)=\mmsa^4$, giving full subgraphs. The two components of $G_{\mmsa^1}$ and the graph $G_{\mmsa^2}$ have no proper full subgraphs, while $G_{\mmsa^2}$ is a proper full subgraph of $G_{\mmsa^4}$. Thus, it follows from Lemma \ref{consfull}(iii) that $\omega^1,\omega^2,\omega^3$ are minimal.

Lemma \ref{consfull}(iii) cannot be reversed. Consider for example  the CRN $R_2$ in \eqref{system2}. The vector $\omega=A+B+C$ is a minimal P-semiflow. The associated graph $G_{\mms(\omega)}$ is
 $ \xymatrix{ A \ar@<0.3ex>[r]^{r_1} &  C \ar@<0.3ex>[l]^{r_2} & B \ar[l]_{r_1}}$ and has a proper full subgraph  $ \xymatrix{ A \ar@<0.3ex>[r]^{r_1} &  C \ar@<0.3ex>[l]^{r_2}}$.  Further,  $\omega_2=A+C$ is a P-semiflow for the CRN $R_1$ in \eqref{system2}, but there is not a P-semiflow involving all species. This implies that if $\widetilde{G}$ is a full connected subgraph of $G$ and $G$ corresponds to a P-semiflow, then $\widetilde{G}$ does not necessarily correspond to a P-semiflow. Similarly, if $\widetilde{G}$ corresponds to a P-semiflow, then $G$ does not necessarily  correspond  to one too. 
 
 \begin{proposition}\label{conscut} Let $\mmS_{\alpha}\subseteq\mmS$ be a subset. If $\mmS_{\alpha}$ is a cut,  then
$\omega=\sum_{i| S_i\in \mmS_{\alpha}} S_i$ is a P-semiflow. In  this  case, $\omega$ is minimal if and only if $G_{\mms_{\alpha}}$ is connected.
\end{proposition}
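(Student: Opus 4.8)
The plan is to treat the two assertions separately. For the first, observe that every coefficient of $\omega=\sum_{S_i\in\mmS_\alpha}S_i$ equals $1>0$, so it suffices to show $\omega\in\Gamma^\perp$, i.e. $\omega\cdot(y'-y)=0$ for every reaction $r\colon y\rightarrow y'$. Writing this out, $\omega\cdot(y'-y)=\sum_{S_i\in\mmS_\alpha}y'_i-\sum_{S_i\in\mmS_\alpha}y_i$, so I must show the two sums coincide; positivity of the coefficients then upgrades ``semiflow'' to ``P-semiflow'' for free.

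To control the two sums I would exploit the cut structure. Because $\mmS_\alpha$ is non-interacting, any single complex contains at most one species of $\mmS_\alpha$, and with stoichiometric coefficient $0$ or $1$; hence each of the two sums lies in $\{0,1\}$. I then split into cases. If $r$ involves no species of $\mmS_\alpha$, both sums vanish and the difference is $0$. Otherwise $r$ involves some $S_i\in\mmS_\alpha$, and fullness guarantees that $r$ is the label of an edge $S_a\xrightarrow{r}S_b$ of $G_{\mmS_\alpha}$; by the definition of the species graph this forces $y_a\neq0$ and $y'_b\neq0$ with $S_a,S_b\in\mmS_\alpha$, hence $y_a=y'_b=1$. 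The non-interacting property then identifies $S_a$ as the unique $\mmS_\alpha$-species appearing in $y$ and $S_b$ as the unique one appearing in $y'$, so both sums equal $1$ and again the difference is $0$.

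The conceptual crux, and the step I expect to be the main obstacle, is recognizing that \emph{fullness is precisely the condition that enforces balance}: it forbids a reaction that consumes or produces an $\mmS_\alpha$-species on only one side, because such a reaction could not label any edge of $G_{\mmS_\alpha}$ (an edge requires both endpoints in $\mmS_\alpha$), contradicting fullness. So the delicate point is to invoke the species-graph definition correctly, so that the mere existence of the edge simultaneously produces an $\mmS_\alpha$-species of coefficient $1$ in both $y$ and $y'$; the rest is bookkeeping.

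For the equivalence I would first note $\mmS(\omega)=\mmS_\alpha$. The forward implication is immediate from Lemma~\ref{consfull}(i), since a minimal semiflow has connected support graph. For the converse, I would combine two earlier facts: as $\mmS_\alpha$ is a cut it is in particular non-interacting, and a connected non-interacting graph has no proper full subgraphs (the remark following Lemma~\ref{cut}, obtained from Lemma~\ref{cut}(i,v)). Having already shown that $\omega$ is a P-semiflow, Lemma~\ref{consfull}(iii) applied to $G_{\mmS(\omega)}=G_{\mmS_\alpha}$ then yields that $\omega$ is minimal, completing the proof.
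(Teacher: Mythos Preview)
Your proposal is correct and follows essentially the same approach as the paper: for the first assertion you argue case-by-case on whether a reaction involves a species of $\mmS_\alpha$, using non-interaction to bound each sum by $1$ and fullness to force both sums to equal $1$ simultaneously (the paper routes this through Lemma~\ref{cut}(ii,iii), while you argue directly from the definitions, but the content is identical). For the equivalence you invoke exactly the same lemmas as the paper---Lemma~\ref{consfull}(i) for the forward direction and Lemma~\ref{cut}(i,v) combined with Lemma~\ref{consfull}(iii) for the converse.
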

\begin{proof}
The stoichiometric coefficients of  $S_i\in \mmsa$ are either $0$ or $1$ since $\mmS_{\alpha}$ is a cut. Consider  $r\colon y\rightarrow y'\in \mmR$. If $y_i,y'_i=0$ for all $i$ such that  $S_i\in \mmS_{\alpha}$, then clearly $\omega\cdot (y'-y)=0$. Since $\mmS_{\alpha}$ is  non-interacting, if $y_i\neq 0$ for some $S_i\in \mmS_{\alpha}$, then $y_j=0$ for all $S_j\in \mmS_{\alpha}$ such that $i\neq j$.  From Lemma \ref{cut}(ii,iii), $r$ is the label of exactly one edge of $G_{\mmS_{\alpha}}$ connected to $S_i$, and there exists exactly one species $S_j\in \mmS_{\alpha}$ such that $y'_{j}\neq 0$. Thus, $\omega\cdot (y'-y)= y_i - y_j' = 1-1=0$. This shows that $\omega$ is a P-semiflow.

For the second part of the statement, Lemma \ref{consfull}(i) implies that if $\omega$ is minimal then $G_{\mmsa}$ is connected.  On the other hand, if $G_{\mmsa}$ is connected, then by Lemma \ref{cut}(i,v), $G_{\mmsa}$ has no proper full subgraphs and by Lemma \ref{consfull}(iii), $\omega$ is minimal. 
\end{proof}

The reverse is not true: In the reaction system $S_1+S_2\rightarrow  Y_1+Y_2$, $S_3\rightarrow S_1$, $Y_1\rightarrow S_3$, $Y_2\rightarrow S_3$, the vector $\omega=S_1+S_3+Y_1+Y_2$ is a P-semiflow but the set $\{S_1,S_3,Y_1,Y_2\}$ is not a cut. Further, $\omega$ is minimal. Thus, the non-interacting property cannot be read from the coefficients of the P-semiflow. 

\emph{Remark. }  If $\mmsa$ is a cut, we denote the P-semiflow given in Proposition \ref{conscut} by $\omega(\mmsa)=\sum_{i| S_i\in \mmS_{\alpha}} S_i$. The operation $\omega(\cdot)$ defines a map between the set of cuts and the set of P-semiflows such  that  sets with connected associated graphs are mapped to minimal P-semiflows. The operation $\mms(\cdot)$ defines a map between  the set of P-semiflows and the  subsets of $\mms$ with  full associated graphs such that  minimal P-semiflows are mapped to  subsets with connected associated graphs. The map $\mms(\omega(\cdot))$ is the identity.

From Lemma \ref{consfull}(ii) and Proposition \ref{conscut} we derive the following corollary.

\begin{corollary}\label{nonint}
\begin{enumerate}[(i)]
\item Let $\omega$ be a semiflow such that $\mmS(\omega)$ is non-interacting.  Then $\mms(\omega)$ is a cut. 
\item If $\mmsa\subseteq\mms$ is a non-interacting subset and $\omega(\mmsa)$ is not a P-semiflow, then $\mmsa$ is not a cut and there is no semiflow with support $\mmsa$.
\end{enumerate}
\end{corollary}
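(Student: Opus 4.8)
The plan is to read both parts off the two results already established, namely Lemma~\ref{consfull}(ii) and Proposition~\ref{conscut}, with part (ii) additionally bootstrapping part (i). Recall that by definition a subset is a \emph{cut} precisely when it is both full and non-interacting, so in each part the task reduces to supplying whichever of these two properties is not already given.

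For part (i), I would argue as follows. By hypothesis $\mms(\omega)$ is non-interacting, so the non-interacting half of the definition of a cut is automatic. It therefore suffices to show that $G_{\mms(\omega)}$ is full. But this is exactly the second alternative in the hypothesis of Lemma~\ref{consfull}(ii): since $\mms(\omega)$ is non-interacting, that lemma yields that $G_{\mms(\omega)}$ is full. Hence $\mms(\omega)$ is full and non-interacting, i.e. a cut, and part (i) is complete.

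For part (ii) I would proceed by contraposition in two steps. First, to see that $\mmsa$ is not a cut, I use Proposition~\ref{conscut} in the form ``$\mmsa$ a cut $\Rightarrow \omega(\mmsa)$ a P-semiflow''. Since we are given that $\omega(\mmsa)$ is \emph{not} a P-semiflow, the contrapositive forces $\mmsa$ to fail to be a cut. Second, to rule out any semiflow with support $\mmsa$, suppose for contradiction that a semiflow $\omega$ with $\mms(\omega)=\mmsa$ existed. Because $\mmsa$ is non-interacting, $\mms(\omega)$ is non-interacting, so part (i) applies and gives that $\mms(\omega)=\mmsa$ is a cut. This contradicts the first step, so no such semiflow exists.

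I do not expect a genuine obstacle here, since no new graph-theoretic argument is needed; the only care required is to keep the logical directions straight, using Proposition~\ref{conscut} in its contrapositive form and then reusing the already-proved part (i) inside the proof of part (ii).
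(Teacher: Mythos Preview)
Your proof is correct and follows exactly the approach indicated by the paper, which simply states that the corollary is derived from Lemma~\ref{consfull}(ii) and Proposition~\ref{conscut}. Your explicit unpacking---using Lemma~\ref{consfull}(ii) to obtain fullness in (i), the contrapositive of Proposition~\ref{conscut} to exclude $\mmsa$ being a cut in (ii), and then bootstrapping (i) to rule out any semiflow supported on $\mmsa$---is precisely the intended argument.
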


Therefore, there is a one-to-one correspondence between cuts and P-semiflows with non-interacting support and  non-zero entries equal to one. The results of this section give rise to the following corollary.

\begin{corollary}\label{semiflows}
Let $\mmsa$ be a non-interacting set such that $G_{\mmsa}$ is connected.
\begin{enumerate}[(i)]
\item If $\mmsa$ is a cut, then any semiflow with support included in $\mmsa$ is a multiple of $\omega(\mmsa)$. 
\item If $\mmsa$ is not a cut, then there are no semiflows with support included in $\mmsa$.
\end{enumerate}
\end{corollary}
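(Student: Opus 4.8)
The plan is to derive both parts by assembling the structural results already established in this section; no genuinely new computation is required beyond bookkeeping with supports and subgraphs.

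For part (i), I would appeal to Proposition \ref{conscut} directly. Since $\mmsa$ is a cut, that proposition makes $\omega(\mmsa)=\sum_{S_i\in\mmsa}S_i$ a P-semiflow, and because $G_{\mmsa}$ is connected by hypothesis, the same proposition certifies that $\omega(\mmsa)$ is \emph{minimal}. Now if $\widetilde{\omega}$ is any semiflow with $\mms(\widetilde{\omega})\subseteq\mmsa=\mms(\omega(\mmsa))$, the definition of minimality (Definition \ref{minimal}(i)) yields $a\in\R$ with $a\widetilde{\omega}=\omega(\mmsa)$; since $\omega(\mmsa)\neq 0$ we have $a\neq 0$, so $\widetilde{\omega}=a^{-1}\omega(\mmsa)$ is a multiple of $\omega(\mmsa)$. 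This part is essentially immediate.

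For part (ii), I would argue by contradiction: suppose a semiflow $\widetilde{\omega}$ exists with $\mms(\widetilde{\omega})\subseteq\mmsa$. The first step is to promote this to the statement that the support is in fact \emph{all} of $\mmsa$. Since $\mmsa$ is non-interacting, so is its subset $\mms(\widetilde{\omega})$, and hence by Lemma \ref{consfull}(ii) the graph $G_{\mms(\widetilde{\omega})}$ is full, so it is a full subgraph of $G_{\mmsa}$. On the other hand, being non-interacting, $G_{\mmsa}$ has no repeated reaction labels by Lemma \ref{cut}(i), and since it is connected, Lemma \ref{cut}(v) shows that $G_{\mmsa}$ has no proper full subgraphs. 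A nonempty full subgraph of a graph with no proper full subgraphs must be the whole graph, so $\mms(\widetilde{\omega})=\mmsa$.

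The remaining step, which I expect to be the crux, is to convert this into a contradiction: now $\widetilde{\omega}$ is a genuine semiflow whose support is exactly the non-interacting set $\mmsa$, so Corollary \ref{nonint}(i) forces $\mmsa$ to be a cut, contradicting the hypothesis of (ii). The main obstacle is not any single deep step but recognizing that the hypothesis ``support included in $\mmsa$'' can be upgraded to ``support equal to $\mmsa$'' via the full-subgraph machinery; once that upgrade is in place, Corollary \ref{nonint}(i) closes the argument cleanly.
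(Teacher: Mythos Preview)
Your proof is correct and follows essentially the same route as the paper: part (i) via Proposition \ref{conscut} and the definition of minimality, and part (ii) via Lemmas \ref{cut}(i,v) and \ref{consfull}(ii) to force $\mms(\widetilde\omega)=\mmsa$. The only cosmetic difference is the closing contradiction in (ii): the paper observes directly that $G_{\mmsa}$ is not full (since $\mmsa$ is non-interacting but not a cut), whereas you route through Corollary \ref{nonint}(i), which amounts to the same thing.
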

\begin{proof}
(i) Follows from Proposition \ref{conscut} and Lemma \ref{terminal}. (ii) Follows from Lemma \ref{cut}(i,v), Lemma \ref{consfull}(ii) and the fact that $G_{\mmsa}$ is not full.
\end{proof}

Consider the non-interacting subset $\mmsa^3$ of the main example. The vector $\omega=S_4+S_5+S_6+S_7+S_9$ is not a P-semiflow since $\omega\cdot (S_1+S_2-S_4)=-1\neq 0$. It follows that $\mmsa^3$ is not a cut and there is no semiflow with support included in $\mmsa^3$. Reciprocally, consider the P-semiflow $\omega^1$. Since $\mmsa^1=\mms(\omega^1)$ is non-interacting, it is a cut. Further, since the graph $G_{\mmsa^2}$ is connected and $\mmsa^2$ is a cut, any semiflow involving the species in $\mmsa^2$ is a multiple of $\omega^3$.
 Checking if a set is non-interacting is straightforward from the set of complexes. However, checking that the associated subgraph is full can be tedious. We have shown that the relationship between semiflows and cuts gives easy conditions for determining if a non-interacting set is a cut.

\emph{Remark. } Using Lemma \ref{snit},  we find that  if $\mmsa$ is a cut but $G_{\mmsa}$ is not connected, then $\mmsa$ decomposes into a disjoint union of  cuts, $\mmsa=\mmsa^1\cup \dots \cup \mmsa^r$, such that $G_{\mmsa^i}$ is connected for all $i$. Thus, $\omega(\mmsa)= \omega(\mmsa^1)+\dots+ \omega(\mmsa^r)$ and the P-semiflow $\omega(\mmsa)$ decomposes into a sum of minimal P-semiflows.  Further,  if $G_{\mmsa}$ is not connected, e.g. has two connected components $G_{\mmsa^1}$ and  $G_{\mmsa^2}$, then $\omega(\mmsa^1)-\omega(\mmsa^2)$ is a semiflow with support in $\mmsa=\mmsa^1\cup\mmsa^2$, but it is not a multiple of $\omega(\mmsa)$. It follows that requiring $G_{\mmsa}$ to be connected is a necessary condition in Corollary \ref{semiflows}.

\section{Elimination of variables}\label{sec:elim}
Let $\mmS_{\alpha}\subseteq \mmS$ be a non-interacting subset. Let $G_{\mmS_{\alpha}}$ be the species graph associated to $\mmS_{\alpha}$  and assume that it is connected. By Corollary \ref{semiflows}, either $\mmsa$ is a cut and $\omega(\mmsa)$ is minimal,  or $\mmsa$ is not a cut and there is no semiflow with support included in $\mmsa$.  In what follows we discuss conditions such that the concentrations of the species in $\mmS_{\alpha}$ can be fully eliminated from the steady state equations. The discussion  depends on whether $\mmsa$ is a cut or not.

 For simplicity, we assume that $\mmsa=\{S_1,\dots,S_{m}\}$.
Let $\mmR_{\alpha}^c$ be the set of reactions  not appearing as labels in $G_{\mmS_{\alpha}}$ but involving some $S_i\in \mmsa$, and $\mmR_{\alpha,out}^c(i)$, $\mmR_{\alpha,in}^c(i)$ the sets of reactions in  $\mmR_{\alpha}^c$ involving $S_i\in \mmS_{\alpha}$ in the initial and terminal complexes, respectively. Clearly,  $\mmR_{\alpha}^c=\bigcup_{i=1}^m\mmR_{\alpha,out}^c(i)\cup\mmR_{\alpha,in}^c(i)$. Note that  $\mmR_{\alpha}^c=\emptyset$ if and only if $\mmsa$ is a cut.

We restrict our attention to the steady state equation \eqref{steadystate} for a fixed $S_i\in \mmS_{\alpha}$. 
Using the expression in \eqref{steadystate2} and the fact that the stoichiometric coefficients are one,  we  can write this equation as $\dot{S}_i= X_i-Y_i$ with:
$$X_i= \sum_{\substack{j=1\\j\neq i}}^m \sum_{S_j\xrightarrow{r} S_i} k_{r}  c^{y(r)}  +  \sum_{r\in \mmR_{\alpha,in}^c(i)}  k_{r}  c^{y(r)}, \quad
Y_i =\sum_{\substack{j=1\\j\neq i}}^m\sum_{S_i\xrightarrow{r} S_j} k_{r}  c^{y(r)}  + \sum_{r\in \mmR_{\alpha,out}^c(i)} k_{r}  c^{y(r)},$$
 where the first summand in each term is taken over the edges in $G_{\mmS_{\alpha}}$. Recall that there can be more than one edge between two species. Further, since the stoichiometric coefficient of $S_i$ is $0$ or $1$, any edge $S_i\rightarrow S_i$ provides no summand in equation \eqref{steadystate}.
 
 Let $C_{\alpha}=C(\mmsa)=\{c_i|S_i\in \mmsa\}=\{c_1,\dots,c_m\}$.
Each of the monomials $c^{y(r)}$ in  $Y_i$ involves $c_i$, and if another $c_k$ is involved, then $S_k$ interacts with $S_i$ (and in particular $k\notin\{1,\dots,m\}$). 
Similarly,  $c^{y(r)}$ in  $X_i$ involves the variable $c_k$ if and only if $S_k$ produces $S_i$. Further, for $r\in \mmR_{\alpha,in}^c(i)$,   $c^{y(r)}$  does not involve any $c_k\in C_{\alpha}$, while  for $S_j\xrightarrow{r} S_i$ with $S_j\in \mmS_{\alpha}$,  the only such variable is $c_j$. 
 It  follows that the system is linear in $C_{\alpha}$ with coefficients in $\R[\Con\cup\, C^c(\mmsa)]$, where 
 \begin{equation}\label{calpha}
C^c(\mmsa)=\{c_i|S_i\in \mms\setminus\mmsa \ \textrm{interacts with or  produces  some }S_j\in \mmsa\}.
\end{equation}
We write $C_{\alpha}^c=C^c(\mmsa)$ for short  and note that $C_{\alpha}\cap C_{\alpha}^c=\emptyset$. 
Let $\widehat{y}_j(r)$ denote the vector in $\R^{s-1}$ obtained from $y(r)$ by removing the $j$-th coordinate. We have shown that  $X_i,Y_i$ can be written so that equation \eqref{steadystate} for $S_i\in \mmsa$ becomes
\begin{equation}\label{redsys} 0= \sum_{j=1}^m a_{i,j}c_j + z_i,\end{equation}
where $a_{i,i}=e_i + d_i$ and
\begin{align*}
e_{i} &= -\sum_{\substack{j=1\\j\neq i}}^m\sum_{S_i\xrightarrow{r} S_j} k_{r}  c^{\widehat{y}_i(r)}, & a_{i,j} &= \sum_{S_j\xrightarrow{r} S_i} k_{r} c^{\widehat{y}_j(r)},  \\  d_i &=   -\sum_{r\in \mmR_{\alpha,out}^c(i)}  k_{r}c^{\widehat{y}_i(r)}, &
  z_i &= \sum_{r\in \mmR_{\alpha,in}^c(i)}  k_{r}c^{y(r)}.
\end{align*}
Let $A=\{a_{i,j}\}$ be the $m\times m$ matrix with $a_{i,j}$ defined as above, $d=(d_1,\dots,d_m)$ and $z=(z_1,\dots,z_m)$.
Note that $\mmsa$ is a cut if and only if $z=d=0$.   The discussion above provides a proof of the following lemma:

\begin{lemma}\label{Slinear2}
The steady state equations \eqref{steadystate} for $S_i\in \mmS_{\alpha}$ form an $m\times m$
  linear system of equations in  $C_{\alpha}$, $Ax+z=0$, where the entries of the matrix $A$ and the independent term $z$ are either zero or S-positive in $\R[\Con\cup\,C_{\alpha}^c]$. Further,   $\mmsa$ is a cut if and only if $z=d=0$, in which case the system is homogeneous.
\end{lemma}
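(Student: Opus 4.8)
The plan is to show that the discussion immediately preceding the lemma statement already establishes everything claimed, so the proof amounts to organizing and verifying the assertions rather than deriving new facts. I would begin by recalling that we have rewritten each steady state equation \eqref{steadystate} for $S_i \in \mmsa$ in the form \eqref{redsys}, namely $0 = \sum_{j=1}^m a_{i,j} c_j + z_i$, with the explicit formulas for $a_{i,j}$, $e_i$, $d_i$, and $z_i$ given above. Collecting these $m$ equations into matrix form yields $Ax + z = 0$ with $x = (c_1,\dots,c_m)$, which is by construction an $m \times m$ linear system in the variables $C_{\alpha}$. The content to verify is threefold: that this really is a linear system in $C_{\alpha}$ with the stated coefficient ring, that the entries are zero or S-positive, and the cut characterization.

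For linearity and the coefficient ring $\R[\Con\cup\, C_{\alpha}^c]$, I would point back to the monomial analysis: each monomial $c^{y(r)}$ appearing in $X_i$ or $Y_i$ involves $c_i$ to degree at most one (since stoichiometric coefficients of species in $\mmsa$ are $0$ or $1$ by non-interaction), and any other variable $c_k$ appearing must come from a species $S_k$ that either interacts with or produces some $S_j \in \mmsa$ — exactly the species defining $C_{\alpha}^c$ in \eqref{calpha}. Crucially, $C_{\alpha} \cap C_{\alpha}^c = \emptyset$, which is what guarantees that after dividing out the single factor $c_j$ (obtaining $c^{\widehat{y}_j(r)}$), the remaining coefficients $a_{i,j}$, $d_i$, $z_i$ genuinely lie in $\R[\Con\cup\, C_{\alpha}^c]$ and contain no variable from $C_{\alpha}$. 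So the system is linear in $C_{\alpha}$ over this ring, as required.

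For S-positivity, I would inspect the explicit formulas. The off-diagonal entries $a_{i,j}$ ($j \neq i$) are sums of terms $k_r c^{\widehat{y}_j(r)}$, each a monomial with positive rate constant, hence either empty (zero) or S-positive. The same holds for $z_i$, a sum over $\mmR^c_{\alpha,in}(i)$ of monomials $k_r c^{y(r)}$ with non-negative exponents. For the diagonal $a_{i,i} = e_i + d_i$, both $e_i$ and $d_i$ are negatives of sums of positive monomials, so $-a_{i,i}$ is zero or S-positive; I would note that the lemma's phrasing ("zero or S-positive") should be read as allowing a sign convention on the diagonal, or equivalently that each individual summand contributing to $A$ is zero or S-positive with the indicated sign, matching the column-sum structure of a Laplacian-type matrix.

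Finally, for the cut characterization, I would argue both directions using the identity $\mmR_{\alpha}^c = \emptyset$ if and only if $\mmsa$ is a cut, which was observed when $\mmR_{\alpha}^c$ was introduced. If $\mmsa$ is a cut, then $\mmR^c_{\alpha,in}(i)$ and $\mmR^c_{\alpha,out}(i)$ are empty for every $i$, forcing $z_i = 0$ and $d_i = 0$; conversely, $z = d = 0$ forces all these reaction sets empty, hence $\mmR_{\alpha}^c = \emptyset$ and $\mmsa$ a cut. When $z = d = 0$ the system reduces to $Ax = 0$, which is homogeneous. I do not anticipate a genuine obstacle here since all ingredients are assembled in the preceding paragraphs; the only point requiring care is the sign convention for the diagonal entries in the S-positivity claim, so I would state explicitly that $-a_{i,i}$ (rather than $a_{i,i}$) is the S-positive quantity and confirm this is consistent with the zero-column-sum structure that later invokes the Matrix-Tree theorem.
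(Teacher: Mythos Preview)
Your proposal is correct and follows exactly the paper's approach: the paper simply states that ``the discussion above provides a proof of the following lemma,'' and you have accurately organized that discussion into the three claims (linearity over $\R[\Con\cup C_{\alpha}^c]$, S-positivity of the entries, and the cut characterization via $\mmR_{\alpha}^c=\emptyset$). Your flag on the diagonal sign is apt---indeed $-a_{i,i}$, not $a_{i,i}$, is the S-positive quantity---and this is consistent with the Laplacian structure used afterward.
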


If $A$ has maximal rank $m$, then the system has a unique solution in $\R(\Con\cup\, C_{\alpha}^c)$.  
By Corollary \ref{semiflows},  if $\mmsa$ is a cut then the column sums of $A$ are all zero and the system cannot have maximal rank. If $\mmsa$ is not a cut then there are no semiflows with support in $\mmsa$. 
The column sums of the matrix $A$ are  (for column $i$):
\begin{align*}
\sum_{k=1}^m  a_{k,i}  &= \sum_{i\neq k}\sum_{S_i\xrightarrow{r} S_k} k_{r}  c^{\widehat{y}_i(r)} -\sum_{\substack{j=1\\j\neq i}}^m\sum_{S_i\xrightarrow{r} S_j} k_{r}  c^{\widehat{y}_i(r)} +d_i = d_i.
\end{align*}
These are zero as polynomials in $\R[\Con\cup\, C_{\alpha}^c]$ if and only if $\mmR_{\alpha,out}^c(i)=\emptyset$ for all $i$, and the condition $d=0$ is equivalent to the column sums being zero.
If $\mmsa$ is not a cut, but $d=0$, then $z\neq 0$ as a tuple with entries in $\R[\Con\cup\, C_{\alpha}^c]$. It follows that the system is incompatible  in $\R(\Con\cup\, C_{\alpha}^c)$, because $\sum_i z_i\neq 0$.  The only possible non-negative steady state solutions must satisfy $z_i=0$ for some $i$ and hence $c_j=0$ for some $c_j\in C_{\alpha}^c$ such that $S_j$ produces $S_i\in \mmsa$. 

We proceed now to discuss the case in which $\mmsa$ is a cut and the case in which it is not a cut. Both cases could be merged into a single approach, but the discussion of the first  situation becomes more transparent when it is treated separately.

\medskip
\textbf{Elimination of variables in a cut. } Let $\mmsa\subseteq \mms$ be a cut such that $G_{\mmsa}$ is connected. 
For $S_i\in \mmS_{\alpha}$ the equations in \eqref{steadystate} form an $m\times m$ homogeneous linear system of equations with variables $C_{\alpha}$ and coefficients in $\R[\Con\cup\, C_{\alpha}^c]$. Using equation \eqref{redsys}, equation  \eqref{steadystate} becomes
\begin{equation}\label{B-system}
0= \sum_{j=1}^{m} a_{i,j}c_j, \qquad i=1,\dots,m.
\end{equation}

Because the column sums of $A$ are zero, $A$ is the Laplacian of a labeled directed graph $\wG_{\mmS_{\alpha}}$ with node set $\mmS_{\alpha}$ and a labeled edge $S_j\xrightarrow{a_{i,j}} S_i$, whenever   $a_{i,j}\neq 0$, $i\not=j$. Note that $a_{i,j}\in \R[\Con\cup\, C_{\alpha}^c]$ is S-positive.
We have that $G_{\mmS_{\alpha}}$ is (strongly) connected if and only if $\wG_{\mmS_{\alpha}}$ is. 
The two graphs differ in the labels and in that multiple directed edges from $S_i$ to $S_j$ in  $G_{\mmS_{\alpha}}$  are collapsed to a single directed edge in $\wG_{\mmS_{\alpha}}$. Further, the graph $\wG_{\mmS_{\alpha}}$ has no self-loops (that is, those of $G_{\mmS_{\alpha}}$ are removed by construction).

By the Matrix-Tree theorem, the principal minors $A_{(i,j)}$ of $A=\mathcal{L}(\wG_{\mmS_{\alpha}})$ are 
$$A_{(i,j)} =(-1)^{m-1+i+j}   \sum_{\tau \in \Theta(S_j)}  \pi(\tau).   $$ 
Thus, $A$ has rank $m-1$ if and only if there exists at least one spanning tree in $\wG_{\mmS_{\alpha}}$ rooted at some $S_j$,  $j=1,\dots,m$. 
The next proposition follows from the discussion above. In particular, it holds  if $\wG_{\mmS_{\alpha}}$ (or equivalently $G_{\mmS_{\alpha}}$) is strongly connected.

\begin{proposition}\label{maxrank}
Assume that $\mmS_{\alpha}$ is a cut such that $G_{\mmsa}$ is connected and let $\overline{\omega}=\sum_{i=1}^m c_i$ be the conservation law obtained from  the P-semiflow $\omega(\mmsa)$. The following statements are equivalent:
\begin{enumerate}[(i)]
\item  $\overline{\omega}=\sum_{i=1}^m c_i$ is the only  conservation law with variables in $C_{\alpha}$.
\item  $\wG_{\mmS_{\alpha}}$   has at least one rooted spanning tree. 
\item The rank of $A$ is $m-1$.
\end{enumerate}
\end{proposition}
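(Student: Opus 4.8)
The plan is to prove the three-way equivalence by first settling the graphical statement (ii)$\iff$(iii) with the Matrix-Tree theorem and S-positivity, and then establishing (i)$\iff$(iii) by a left-kernel dimension count. Throughout I use that, since $\mmsa$ is a cut, Lemma \ref{Slinear2} forces $z=d=0$, so \eqref{B-system} is the homogeneous system $Ax=0$ with $A=\mathcal{L}(\wG_{\mmS_{\alpha}})$ and all column sums of $A$ zero. In particular $\det A=0$, hence $\mathrm{rank}(A)\le m-1$, and the all-ones vector $\mathbf{1}=(1,\dots,1)$ satisfies $\mathbf{1}^{\top}A=0$; this left-kernel vector records precisely the conservation law $\overline{\omega}=\sum_{i=1}^m c_i$ coming from the P-semiflow $\omega(\mmsa)$.

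For (ii)$\iff$(iii): because $\mathrm{rank}(A)\le m-1$, we have $\mathrm{rank}(A)=m-1$ if and only if some order-$(m-1)$ minor of $A$ is nonzero. The minors obtained by deleting one row and one column are exactly the quantities $A_{(i,j)}$, and the Matrix-Tree theorem gives $A_{(i,j)}=(-1)^{m-1+i+j}\sum_{\tau\in\Theta(S_j)}\pi(\tau)$. By Lemma \ref{Slinear2} each edge label $a_{i,j}$ of $\wG_{\mmS_{\alpha}}$ is S-positive in $\R[\Con\cup C_{\alpha}^c]$, so each product $\pi(\tau)$, and hence each nonempty sum $\sum_{\tau\in\Theta(S_j)}\pi(\tau)$, is S-positive and therefore nonzero; when $\Theta(S_j)=\emptyset$ the sum is $0$. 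Thus some $A_{(i,j)}\ne 0$ exactly when some $\Theta(S_j)\ne\emptyset$, i.e. exactly when $\wG_{\mmS_{\alpha}}$ has a rooted spanning tree. This is the content already anticipated just before the statement.

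For (i)$\iff$(iii): I identify the conservation laws with variables in $C_{\alpha}$ with the left kernel of $A$ over the field $\R(\Con\cup C_{\alpha}^c)$. Treating the core concentrations $C_{\alpha}^c$ and the rate constants as parameters, the subsystem reads $\dot{c_i}=\sum_{j=1}^m a_{i,j}c_j$ (valid precisely because $\mmsa$ is a cut), so a linear combination $\sum_{i=1}^m v_i c_i$ is conserved if and only if $v^{\top}A=0$. The space of such $v$ has dimension $m-\mathrm{rank}(A)$ and always contains $\mathbf{1}$. Hence it is one-dimensional—so every conservation law is a scalar multiple of $\overline{\omega}=\sum_{i=1}^m c_i$—exactly when $\mathrm{rank}(A)=m-1$; and when $\mathrm{rank}(A)<m-1$ there is a left-kernel vector not proportional to $\mathbf{1}$, producing a second, rate-constant-dependent conservation law. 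This yields (i)$\iff$(iii), and chaining with the previous paragraph closes the equivalence.

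The main obstacle is the correct formalization in the (i)$\iff$(iii) step: one must read ``conservation law with variables in $C_{\alpha}$'' over the field $\R(\Con\cup C_{\alpha}^c)$, so that the rate-constant-dependent (kinetic) conservation laws are included. Otherwise, by Corollary \ref{semiflows}(i), every \emph{stoichiometric} conservation law supported on a cut is already a multiple of $\omega(\mmsa)$, so $\overline{\omega}$ would be the unique conservation law unconditionally and the equivalence would collapse. Once conservation laws are identified with the left kernel of $A$ and the ever-present vector $\mathbf{1}$ is noted, the dimension count is immediate; the only remaining care is the S-positivity bookkeeping in (ii)$\iff$(iii) guaranteeing that a nonempty tree sum never vanishes.
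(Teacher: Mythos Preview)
Your proof is correct and follows exactly the argument the paper intends: the text immediately preceding the proposition already derives (ii)$\iff$(iii) via the Matrix-Tree theorem and S-positivity of the labels, and the paper simply asserts that the proposition ``follows from the discussion above,'' leaving the (i)$\iff$(iii) left-kernel dimension count implicit. You have filled in that count faithfully, and your closing remark on reading ``conservation law'' over $\R(\Con\cup C_{\alpha}^c)$ (so as to include kinetic conservation laws, cf.\ the paper's own comment after the proposition and Lemma~\ref{stoichkin}) is exactly the right clarification.
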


Since $G_{\mmS_{\alpha}}$ is connected and $\mmsa$ is a cut, any semiflow with support in $\mmsa$ is a multiple of $\omega(\mmsa)$.  The  proposition says that $\wG_{\mmS_{\alpha}}$ has a rooted spanning tree if and only if there are no other conservation laws  with concentrations only in $C_{\alpha}$.

\emph{Remark.}
Let $\mmC_{\alpha}$ be the set of complexes involving at least one species in $\mmS_{\alpha}$. Consider the linkage classes in  $\mmC_{\alpha}$ given by the relation ``ultimately reacts to'' (De\-fi\-nition \ref{maindefs}).  If there is a  rooted spanning tree, then  the root must be in a terminal strong linkage class, because the elements in such a class cannot react to complexes outside  the class.  Further, using the same reasoning,  there cannot be two terminal strong linkage classes.  Consequently, if there is a rooted spanning tree, there is  only  one terminal strong linkage class.
This remark is closely related to  Lemma \ref{stoichkin}.

For simplicity we assume that there exists a spanning tree rooted at $S_1$.  Then, the variables $c_2,\dots,c_{m}$ can be solved in the coefficient field $\R(\Con\cup\, C_{\alpha}^c\cup \{c_1\})$. 
In particular,  using Cramer's rule  and the Matrix-Tree theorem, we obtain
\begin{equation}\label{Srational}
 c_j = \frac{(-1)^{j+1}A_{(1,j)}}{A_{(1,1)}} = \frac{\sigma_j(C_{\alpha}^c)}{\sigma_1(C_{\alpha}^c)} c_1=
\varphi_j(C_{\alpha}^c)c_1, \textrm{ where }
\sigma_j(C_{\alpha}^c)=  \sum_{\tau \in \Theta(S_j)}  \pi(\tau)
\end{equation} 
and $j=1,\dots, m$.  Since there is a spanning tree rooted at $S_1$, it follows that  $\sigma_1(C_{\alpha}^c)$ is S-positive and $\sigma_j(C_{\alpha}^c)$ is either zero or  S-positive in $\R[\Con\cup\, C_{\alpha}^c]$. 
If the graph $\wG_{\mmS_{\alpha}}$  is strongly connected, then $\sigma_j(C_{\alpha}^c) \neq 0$ for all $j$ and  any choice of root $S_j$ could be used instead of $S_1$. 
The arguments given above and the definition of $a_{i,j}$ provide a proof of the following lemma.

\begin{lemma}\label{appear}
If $c_k\in C_{\alpha}^c$ is a  variable of the function $\sigma_j(C_{\alpha}^c)$ for some $j$, 
then there exists $S_i\in \mmS_{\alpha}$ that interacts with $S_k$ and $S_i$ ultimately produces $S_j$ via $\mmsa$. 
Specifically, there is a complex $y^{1}$ involving $S_i$ and $S_k$, and a complex $y^{2}$ involving some species $S_u\in \mmsa$, such that $y^1$ reacts to $y^2$ and $S_u$ ultimately produces $S_j$ via $\mmsa$. If $\wG_{\mmS_{\alpha}}$ is strongly connected, then the reverse is true.
\end{lemma}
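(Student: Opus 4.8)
The plan is to unfold $\sigma_j(C_{\alpha}^c)$ through the Matrix-Tree theorem and then reduce the question ``which variables occur'' to a statement about individual edge labels. Since $\sigma_j=\sum_{\tau\in\Theta(S_j)}\pi(\tau)$ with $\pi(\tau)=\prod_{S_v\xrightarrow{a}S_u\in\tau}a$ a product of labels $a=a_{u,v}$, and since each $a_{u,v}$ is zero or S-positive in $\R[\Con\cup\,C_{\alpha}^c]$, every monomial carries a non-negative coefficient; hence no cancellation can occur, either inside a single product or across the sum over $\Theta(S_j)$. Therefore $c_k$ is a variable of $\sigma_j$ if and only if $c_k$ is a variable of the label $a_{u,v}$ of some edge $S_v\to S_u$ lying in some spanning tree $\tau$ rooted at $S_j$. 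The first genuine step is then purely local: from $a_{u,v}=\sum_{S_v\xrightarrow{r}S_u}k_r\,c^{\widehat{y}_v(r)}$, the variable $c_k$ appears in $a_{u,v}$ exactly when there is a reaction $r\colon S_v\to S_u$ labelling an edge of $G_{\mmsa}$ whose initial complex $y^1=y(r)$ has $y^1_k\neq0$ with $k\neq v$. For such an $r$ the complex $y^1$ involves both $S_v$ and $S_k$, so $S_v$ interacts with $S_k$, while the terminal complex $y^2=y'(r)$ involves $S_u\in\mmsa$ and $y^1$ reacts to $y^2$; this identifies the species $S_i:=S_v$ of the statement.

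For the forward implication I would then exploit the orientation of the tree. The edge $S_v\to S_u$ sits in a spanning tree rooted at $S_j$, in which every edge points toward the root, so following out-edges from $S_u$ produces a directed path $S_u\to\cdots\to S_j$ inside $\wG_{\mmsa}$, all of whose nodes lie in $\mmsa$. Each edge of this path corresponds to a ``produces'' relation between species of $\mmsa$, so $S_u$ ultimately produces $S_j$ via $\mmsa$; prepending the relation ``$S_v$ produces $S_u$'' shows that $S_i=S_v$ ultimately produces $S_j$ via $\mmsa$ as well. Together with the local analysis this yields both the general claim and its ``specifically'' refinement.

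For the reverse implication, assuming $\wG_{\mmsa}$ strongly connected, the hypotheses supply a reaction $r\colon y^1\to y^2$ with $y^1$ involving $S_i,S_k$ and $y^2$ involving $S_u$, hence an edge $S_i\to S_u$ of $\wG_{\mmsa}$ whose label $a_{u,i}$ carries a monomial containing $c_k$. By the no-cancellation principle above it is enough to exhibit a single spanning tree rooted at $S_j$ that uses this edge: $c_k$ will then survive into $\pi(\tau)$ and hence into $\sigma_j$. Since $S_u$ ultimately produces $S_j$ via $\mmsa$, there is a directed path $S_u\to\cdots\to S_j$ in $\wG_{\mmsa}$; I would prepend $S_i\to S_u$ and invoke the stated fact that, in a strongly connected graph, any directed path from $v$ to $w$ extends to a spanning tree rooted at $w$, thereby obtaining a tree rooted at $S_j$ containing $S_i\to S_u$.

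The main obstacle is precisely this extension step. To extend a path that must \emph{begin} with the prescribed edge $S_i\to S_u$, the path $S_u\to\cdots\to S_j$ has to be chosen simple and avoiding $S_i$; otherwise the concatenation is not a simple directed path and the edge $S_i\to S_u$ can be forced out of every spanning tree rooted at $S_j$ (a cut-vertex phenomenon at $S_i$, in which every route from $S_u$ to $S_j$ passes back through $S_i$ and would close a directed cycle). The delicate point is to argue that strong connectivity, together with the $\mmsa$-path from $S_u$ to $S_j$, permits routing around $S_i$ — or, failing that, to locate an alternative edge carrying $c_k$ that does lie in some tree rooted at $S_j$ — so that the reverse implication goes through.
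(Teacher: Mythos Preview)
Your forward argument is correct and fleshes out what the paper means by saying the lemma follows from ``the arguments given above and the definition of $a_{i,j}$'': the S-positivity/no-cancellation step, the local reading of $c_k$ out of $a_{u,v}$, and the extraction of the directed path $S_u\to\cdots\to S_j$ from the tree orientation are exactly the intended ingredients.

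Your hesitation about the reverse is justified; the obstacle you name cannot be overcome for a \emph{fixed} target $j$. Take $\mmsa=\{S_1,S_2,S_3,S_4\}$ with reactions $S_1+S_k\to S_2$, $S_2\to S_1$, $S_1\to S_3$, $S_3\to S_4$, $S_4\to S_1$. Then $\mmsa$ is a cut, $\wG_{\mmsa}$ is strongly connected, $S_1$ interacts with $S_k$, and $S_1$ (hence also $S_u=S_2$) ultimately produces $S_4$ via $\mmsa$. But the only spanning tree rooted at $S_4$ is $\{S_2\to S_1,\,S_1\to S_3,\,S_3\to S_4\}$, so $\sigma_4=k_2k_3k_4$ and $c_k$ does not occur in it. Every route from $S_2$ to $S_4$ passes back through $S_1$, which is precisely the cut-vertex scenario you flagged; neither ``routing around $S_i$'' nor ``locating another edge carrying $c_k$'' is available here. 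What your argument \emph{does} establish is that $c_k$ appears in $\sigma_u$: the single edge $S_i\to S_u$ is already a directed path to $S_u$ and extends, by the stated property of strongly connected graphs, to a spanning tree rooted at $S_u$. So the converse holds when read as ``$c_k$ is a variable of some $\sigma_j$'' (with $j=u$ a witness), and this is the most that the paper's one-line justification actually supports.
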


The sum of the concentrations in  $C_{\alpha}$ is conserved. Using the equation  $\overline{\omega}=\sum_{i=1}^m c_i$, we obtain
$$ \overline{\omega} = (1+ \varphi_2(C_{\alpha}^c)+\dots+ \varphi_{m}(C_{\alpha}^c))c_1,$$ 
where the coefficient of $c_1$ is S-positive in $\R(\Con\cup\, C_{\alpha}^c)$. Thus,
$$ c_1 = \overline{\varphi}_1(C_{\alpha}^c)=\frac{\overline\omega}{1+ \varphi_2(C_{\alpha}^c)+\dots+ \varphi_{m}(C_{\alpha}^c)}=\frac{\overline\omega\sigma_1(C_{\alpha}^c)}{\sigma_1(C_{\alpha}^c)+ \sigma_2(C_{\alpha}^c)+\dots+ \sigma_m(C_{\alpha}^c)},$$ 
with $\overline{\varphi}_1$ being an S-positive rational function in $C_{\alpha}^c$ with coefficients in $\R(\Con\cup \{\overline{\omega}\})$. Observe that $\overline\omega$ becomes an extra parameter  and can be treated as a symbol as well. Further, if  $\overline{\omega}$ is assigned a positive value, then  
$c_1> 0$ at steady state for positive values of $C_{\alpha}^c$.
 By substitution of $c_1$ by $\overline{\varphi}_1$, we  obtain 
\begin{equation}\label{substcons} 
c_j= \overline{\varphi}_j(C_{\alpha}^c):= \varphi_j(C_{\alpha}^c)\overline{\varphi}_1(C_{\alpha}^c), \qquad j=2,\dots,m,
\end{equation}
 with $\overline{\varphi}_j$  being either zero or an S-positive rational function in $C_{\alpha}^c$ with coefficients in  $\R(\Con\cup \{\overline{\omega}\})$.

\begin{proposition}\label{resultS1} Let $\mmsa\subseteq\mms$ be a cut such that $G_{\mmsa}$ is connected.
Assume that   there is a spanning tree of $\wG_{\mmS_{\alpha}}$  rooted at some species $S_{i}$. Then, there exists a zero or S-positive rational function $\varphi_{j}$ in $C_{\alpha}^c$ with coefficients in $\R(\Con)$, such that equation \eqref{steadystate} for $c_j\in C_{\alpha}$ is satisfied  in $\R(\Con\cup\, C_{\alpha}^c)$ if and only if
$$ c_{j}=\varphi_{j}(C_{\alpha}^c) c_{i},\qquad c_j\in C_{\alpha}.$$
Further, there exists an S-positive rational function $\overline{\varphi}_{i}$ in $C_{\alpha}^c$ with coefficients in $\R(\Con \cup \{\overline{\omega}\})$, such that  the conservation law $\overline{\omega}=\sum_{k=1}^m  c_k$  is fulfilled if and only if $c_{i}= \overline{\varphi}_{i}(C_{\alpha}^c).$
\end{proposition}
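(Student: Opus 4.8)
The plan is to treat this proposition as the packaging of the computation carried out in the preceding paragraphs, so that the proof is essentially an assembly of already-established facts rather than a new argument. First I would reduce to the case $i=1$ by relabeling the species of $\mmsa$, so that the formulas \eqref{Srational} and \eqref{substcons} (derived under the assumption of a spanning tree rooted at $S_1$) apply verbatim; the general root $S_i$ is then recovered by symmetry.

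For the first assertion, I would invoke Lemma \ref{Slinear2} to write the steady state equations \eqref{steadystate} for $S_j\in\mmsa$ as the homogeneous linear system \eqref{B-system}, $Ac=0$, with $A=\mathcal{L}(\wG_{\mmS_{\alpha}})$. The hypothesis that $\wG_{\mmS_{\alpha}}$ has a spanning tree rooted at $S_1$ gives, via Proposition \ref{maxrank}, that $A$ has rank exactly $m-1$, so the solution space is one-dimensional. Cramer's rule together with the Matrix-Tree theorem then yields the explicit parameterization $c_j=\varphi_j(C_{\alpha}^c)c_1$ with $\varphi_j=\sigma_j/\sigma_1$ as in \eqref{Srational}. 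The key point for the ``if and only if'' is that rank $m-1$ forces the kernel to be precisely the line spanned by $(\sigma_1,\dots,\sigma_m)$: this vector lies in the kernel because $A\cdot\mathrm{adj}(A)=\det(A)\,I=0$ and, by the Matrix-Tree theorem, every column of $\mathrm{adj}(A)$ is proportional to $(\sigma_1,\dots,\sigma_m)$. Since $\varphi_1=1$, the solutions of the system are exactly the tuples with $c_j=\varphi_j c_1$. S-positivity is inherited from \eqref{Srational}: $\sigma_1$ is S-positive because a spanning tree rooted at $S_1$ exists, each $\sigma_j$ is zero or S-positive, and hence each $\varphi_j$ is zero or S-positive as a rational function in $C_{\alpha}^c$ with coefficients in $\R(\Con)$.

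For the second assertion, I would substitute the parameterization into the conservation law $\overline{\omega}=\sum_{k=1}^m c_k$ coming from $\omega(\mmsa)$, obtaining $\overline{\omega}=(1+\varphi_2+\dots+\varphi_m)c_1$. The coefficient of $c_1$ is $1$ plus a sum of zero-or-S-positive functions, hence S-positive, and so can be inverted in $\R(\Con\cup C_{\alpha}^c)$; solving gives $c_1=\overline{\varphi}_1(C_{\alpha}^c)$ as in \eqref{substcons}, an S-positive rational function in $C_{\alpha}^c$ once $\overline{\omega}$ is adjoined as a symbol to the coefficient field. The ``if and only if'' here is immediate: among the one-parameter family of steady states from the first part, the conservation law selects the unique value $c_1=\overline{\varphi}_1$.

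Since all the hard analytic input — the rank computation, the Matrix-Tree expression for the minors, and the S-positivity of the $\sigma_j$ — has already been secured in the run-up to the proposition, I expect no genuine obstacle. The only points demanding care are, first, verifying that $(\sigma_1,\dots,\sigma_m)$ actually spans the kernel so that the stated equivalence is a true biconditional rather than merely one inclusion, and second, the bookkeeping of coefficient fields, in particular checking that the coefficient $1+\varphi_2+\dots+\varphi_m$ of the conservation law is S-positive so that $c_1$ can be solved for without introducing sign problems.
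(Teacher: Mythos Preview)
Your proposal is correct and mirrors the paper's own argument: the proposition is stated without a separate proof because the derivation is precisely the computation in the preceding paragraphs culminating in \eqref{Srational} and \eqref{substcons}, and you have identified and organized those steps accurately. The only addition you make beyond the paper is the explicit adjugate justification that $(\sigma_1,\dots,\sigma_m)$ spans the kernel, which is a welcome clarification but not a departure from the paper's method.
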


Consider the main example and the cut $\mmsa=\{S_1,S_4,S_5,S_6\}$ corresponding to a connected component of the graph $G_{\mmsa^1}$ in Figure \ref{main:subgraphs}(a). System \eqref{B-system}  
becomes
$$\left(\begin{array}{cccc}
 -k_1c_2- k_3c_3 & k_2  & k_4 & k_9 \\
 k_1 c_2 &  - k_2 - k_5c_3 & 0 &  k_6 \\
 k_3c_3 & 0 & - k_4-k_7c_2 & k_8 \\
0 &  k_5c_3 &  k_7c_2  & - k_6  - k_8-k_9
\end{array}\right)\left(\begin{array}{c} c_1 \\ c_4 \\ c_5 \\ c_6\end{array}\right) =0.$$
The column sums are zero because of the conservation law $\overline{\omega}^1=c_1+c_4+c_5+c_6$. 
The graph $\widehat{G}_{\mmsa}$ is strongly connected (Figure \ref{main:elimgraphs}(a)),  as  is observed in many real (bio)chemical systems. Thus rooted spanning trees exist and the system has rank $3$. 
 This also follows from Proposition \ref{maxrank} and Lemma \ref{stoichkin}, since each linkage class of the CRN has exactly one terminal strong linkage class. 
 
 The polynomials $\sigma_j$ are:
\begin{align*}
\sigma_1 &= k_2k_4(k_6+k_8+k_9) + k_2k_7(k_6+k_9)c_2 + k_4k_5(k_8+k_9)c_3  + k_5k_7 k_9c_2c_3 \\
 \sigma_4 &= k_1 k_4(k_6+k_8+k_9)c_2 +  k_1k_7(k_6+k_9) c_2^2+ k_3 k_6 k_7c_2 c_3\\ 
 \sigma_5 &= k_2 k_3(k_6+k_8+k_9)c_3 +  k_3 k_5(k_8+k_9)c_3^2+ k_1 k_5 k_8 c_2 c_3 \\
 \sigma_6 &=(k_1 k_4 k_5 + k_2 k_3 k_7)c_2 c_3  +  k_1 k_5 k_7 c_2^2 c_3 +  k_3 k_5 k_7c_2 c_3^2.
\end{align*}
Each monomial in $\sigma_j$ corresponds to a spanning tree rooted at $S_j$. The species $S_2,S_3$ are the only  species interacting with a species in $\mmsa$ and thus only $c_2,c_3$ appear in the expressions. 
Using \eqref{Srational} and \eqref{substcons} we find the steady state expressions of $c_1,c_4,c_5,c_6$ in terms of the rate constants, the total amount $\overline{\omega}^1$, and  the concentrations $c_2,c_3$.

\emph{Remark. } It is  straightforward to find $\sigma_j$ by computing the principal minors of $A$ using any computer algebra software. The advantage of the Matrix-Tree description in the theoretical discussion is that S-positivity of the solutions is easily obtained.

\medskip
\textbf{Elimination of variables in a subset that is not a cut.}
Let $\mmS_{\alpha}\subseteq \mmS$ be a non-interacting subset that is not a cut and assume that $G_{\mmS_{\alpha}}$  is connected. As discussed above, if the column sums are  zero then there are no positive steady state solutions. 
If the column sums are not all zero, then the matrix  $A$ is not a Laplacian. However,  $A$ can be extended such that its determinant is a principal minor of a Laplacian.

Consider  the labeled directed graph $\widehat{G}_{\mmS_{\alpha}}$ with node set  $\mmS_{\alpha} \cup \{*\}$.  We order the nodes such that  $S_i$ is  the $i$-th node and
 $*$ the $(m+1)$-th  node.  The graph $\widehat{G}_{\mmS_{\alpha}}$ has the following labeled directed edges:
$S_j\xrightarrow{a_{i,j}} S_i$ if $a_{i,j}\neq 0$ and $i\neq j$,  $S_i\xrightarrow{-d_i}  *$ if $d_i\neq 0$, and 
  $*\xrightarrow{z_i} S_i$ if $z_i\neq 0$. All labels are S-positive in $\R[\Con\cup\, C_{\alpha}^c]$.
Let $\mathcal{L}=\{\lambda_{i,j}\}$ be the Laplacian of $\widehat{G}_{\mmS_{\alpha}}$.  If $i,j\leq m$, then $\lambda_{i,j}=a_{i,j}$. The entries of the last row  are $\lambda_{m+1,i} =- d_i$ for $i\leq m$ and the entries of the last  column are $\lambda_{i,m+1}= z_i$ for $i\leq m$. 
We conclude that the $(m+1,m+1)$ principal minor of $\mathcal{L}$ is exactly $A$ and thus,  by the Matrix-Tree theorem, we have
$$\sigma(C_{\alpha}^c):=  (-1)^m \det(A) = (-1)^m \mathcal{L}_{(m+1,m+1)} =   \sum_{\tau \in \Theta(*)}  \pi(\tau).  $$ 
If there exists at least one spanning tree rooted at $*$, then $(-1)^{m}\det(A)$ is S-positive in $\R[\Con\cup C_{\alpha}^c]$. In this case the system $Ax+z=0$ has a unique solution in $\R(\Con \cup\, C_{\alpha}^c)$. A spanning tree rooted at $*$ exists if and only if for all  species  $S_i\in \mmS_{\alpha}$, there exists a reaction $y\rightarrow y'$ such that $y'$ does not involve any species in $\mmsa$, $y$ involves some $S_u\in \mmsa$ and $S_i$ ultimately produces  $S_u$.  The existence of such a spanning tree ensures that $\mmR_{\alpha,out}^c(i)\neq \emptyset$   and thus $d_i\neq 0$ for some $i$.

\begin{figure}[t]
\centering
\includegraphics[]{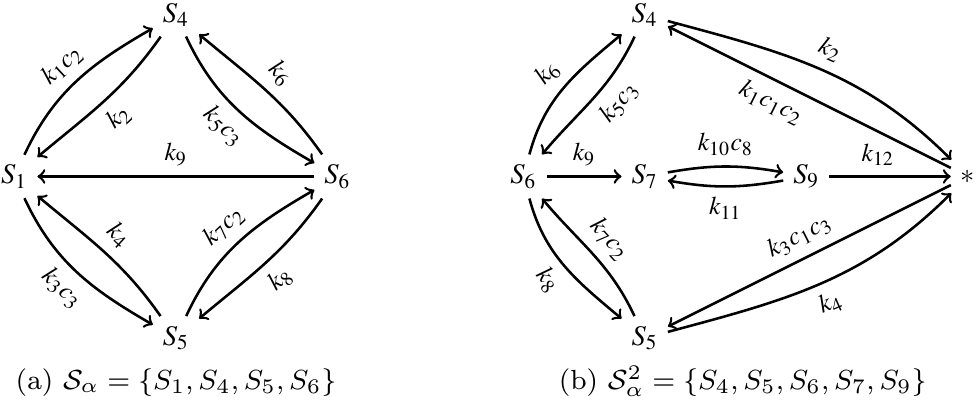}   
 \caption{(a) The graph $\widehat{G}_{\mmsa}$ in the main example for the cut $\mmsa$ and (b) The graph $\widehat{G}_{\mmsa}^2$ for the non-interacting set $\mmsa^2$,  which is not a cut. } \label{main:elimgraphs}
\end{figure}

 Since $A$ is non-interacting but not a cut, there are no semiflows with support in $\mmsa$, Corollary \ref{semiflows}(ii). Similarly to Proposition \ref{maxrank}, we obtain the following proposition:
\begin{proposition}\label{maxrank2}
Assume that $\mmS_{\alpha}$ is a non-interacting set that is not cut and such that $G_{\mmsa}$ is connected. Then  $A$ has maximal rank if and only if there exists a spanning tree  rooted at $*$. Further, if $A$ has maximal rank then there are no conservation laws in the concentrations in $C_{\alpha}$.
\end{proposition}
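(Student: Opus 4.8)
The plan is to mirror the proof of Proposition~\ref{maxrank}, exploiting that the key identity has already been established in the discussion preceding the statement. For the equivalence between maximal rank and the existence of a spanning tree rooted at $*$, I would start from the formula
$$\sigma(C_{\alpha}^c)=(-1)^m\det(A)=\sum_{\tau\in\Theta(*)}\pi(\tau),$$
obtained by applying the Matrix-Tree theorem to the $(m+1,m+1)$ principal minor of the Laplacian $\mathcal{L}$ of $\widehat{G}_{\mmS_{\alpha}}$. The crucial observation is that every edge label of $\widehat{G}_{\mmS_{\alpha}}$ (namely $a_{i,j}$, $-d_i$, and $z_i$) is S-positive in $\R[\Con\cup\,C_{\alpha}^c]$, so each $\pi(\tau)$ is a product of S-positive polynomials and hence itself S-positive. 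Since a sum of S-positive polynomials is again S-positive, the right-hand side is a nonzero polynomial precisely when $\Theta(*)\neq\emptyset$. Therefore $\det(A)\neq0$, i.e.\ $A$ has maximal rank $m$, if and only if there is at least one spanning tree rooted at $*$.

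For the second assertion, I would argue that a conservation law with variables in $C_{\alpha}$ is an expression $\sum_{i=1}^m\lambda_ic_i$ with constant coefficients $\lambda_i\in\R$, not all zero, whose time derivative vanishes identically. Differentiating and substituting the relation $\dot c_i=\sum_{j=1}^m a_{i,j}c_j+z_i$ valid for $S_i\in\mmS_{\alpha}$ gives
$$\sum_{i=1}^m\lambda_i\dot c_i=\sum_{j=1}^m\Bigl(\sum_{i=1}^m\lambda_ia_{i,j}\Bigr)c_j+\sum_{i=1}^m\lambda_iz_i.$$
Because the variable sets $C_{\alpha}$ and $C_{\alpha}^c$ are disjoint, the monomials appearing in the first sum (each divisible by some $c_j\in C_{\alpha}$) are distinct from those in $\sum_i\lambda_iz_i$, which lies in $\R[\Con\cup\,C_{\alpha}^c]$. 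Hence the identity $\sum_i\lambda_i\dot c_i\equiv0$ forces in particular $\sum_{i=1}^m\lambda_ia_{i,j}=0$ for every $j$, that is, $\lambda^{\top}A=0$. If $A$ has maximal rank $m$ this forces $\lambda=0$, contradicting the existence of the conservation law, so no such conservation law can exist.

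Both parts are short once the preceding Matrix-Tree computation is in hand, so I do not expect a genuine obstacle. The point requiring the most care is the no-cancellation step in the first part: it is essential that all labels are S-positive, so that tree monomials can never cancel and the vanishing of the determinant is governed purely by the combinatorial existence of a rooted spanning tree. A secondary point of care is the coefficient-separation argument in the second part, where the disjointness $C_{\alpha}\cap C_{\alpha}^c=\emptyset$ is precisely what lets one read off $\lambda^{\top}A=0$ from the single polynomial identity; note that this argument captures \emph{all} conservation laws in $C_{\alpha}$, not only those arising from semiflows.
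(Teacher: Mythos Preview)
Your proposal is correct and follows the same approach as the paper: the equivalence uses the Matrix-Tree identity $(-1)^m\det(A)=\sum_{\tau\in\Theta(*)}\pi(\tau)$ together with S-positivity of the labels, and the second assertion is obtained by observing that any conservation law in $C_{\alpha}$ yields a nonzero vector in the left kernel of $A$. Your treatment of the second part is in fact more explicit than the paper's, which simply invokes the analogy with Proposition~\ref{maxrank}; in particular, your monomial-separation argument (using $C_{\alpha}\cap C_{\alpha}^c=\emptyset$) cleanly justifies that $\lambda^{\top}A=0$ follows from the polynomial identity and handles conservation laws beyond those coming from semiflows.
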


 If  $A$  does not have maximal rank, then there is a vanishing linear combination of the rows of $A$, $0=\sum_{k=1}^m\lambda_k a_{k,j}$ for all $j$. If there are no conservation laws  in the concentrations in $C_{\alpha}$,
then $0\neq \sum_{k=1}^m\lambda_k \dot{c_k}= \sum_{k=1}^m \sum_{j=1}^m \lambda_k (a_{k,j}c_j + z_k)$ and  it follows that  
$\sum_{k=1}^m\lambda_k z_{k}\neq 0$. If $\lambda_k\geq 0$ for all $k$, then we conclude  that the system \eqref{redsys} is  incompatible in $\R(\Con\cup\, C_{\alpha}^c)$ and there are no positive steady states.  

Assume that a spanning tree rooted at $*$ exists.
For $i=1,\dots,m$, let $\sigma_i$ be the following polynomial in $C_{\alpha}^c$,
$$\sigma_i(C_{\alpha}^c)=(-1)^{i+1} \mathcal{L}_{(m+1,i)} =  \sum_{\tau \in \Theta(S_i)}  \pi(\tau),$$ 
which is either zero or  S-positive in $\R[\Con \cup\, C_{\alpha}^c]$. 
 By Cramer's rule, we have
$$ c_i = \varphi_i(C_{\alpha}^c)=\frac{(-1)^{m+1-i} \mathcal{L}_{(m+1,i)}  }{(-1)^m \mathcal{L}_{(m+1,m+1)}} = \frac{\sigma_i(C_{\alpha}^c)}{\sigma(C_{\alpha}^c)}, $$ 
which is either zero or  S-positive in $\R(\Con \cup\, C_{\alpha}^c)$. 
 If there exists at least one spanning tree rooted at $S_i$, then $\sigma_i\neq 0$ as a polynomial in $\R[\Con \cup\, C_{\alpha}^c]$. A necessary condition for $\sigma_i\neq 0$ is the existence of a directed path from $*$ to $S_i$, which implies that $S_i$ is ultimately produced from some species $S_k\in \mmS\setminus \mmS_{\alpha}$.
  In particular, if $\widehat{G}_{\mmS_{\alpha}}$ is strongly connected then all concentrations are non-zero as elements in $\R(\Con \cup\, C_{\alpha}^c)$.  

Consider the set $\mmsa=\mmsa^3=\{S_4,S_5,S_6,S_7,S_9\}$ in the main example. It is non-interacting, not a cut, and $G_{\mmsa}$ is connected. Further, all species ultimately produce $S_9\in \mmsa$ and $S_9$ reacts to  $S_2+S_3+S_8$ which does not involve species in $\mmsa$.  Hence a spanning tree rooted at $*$ exists. The graph $\widehat{G}_{\mmsa}$ is depicted in Figure \ref{main:elimgraphs}(b) and is strongly connected. We have that $z=(k_1c_1c_2, k_3c_1c_3, 0, 0, 0)$,$C_{\alpha}^c=\{c_1,c_2,c_3,c_8\}$ and
\begin{align*}
\sigma =&  k_{10}k_{12} (k_2 k_4 (k_6+k_8+k_9) +  k_2 k_6 k_7c_2  +  \\ & k_4 k_5 k_8c_3 + 
   k_9(k_4 k_5c_3 + k_2 k_7c_2 +  k_5 k_7c_2 c_3))c_8 \\
   \sigma_4=&  k_{10} k_{12}(k_1 k_4 (k_6+k_8+k_9) + k_1  k_7(k_6+k_9) c_2 +  k_3 k_6 k_7c_3) c_1 c_2 c_8\\
\sigma_5 =&  k_{10} k_{12} (k_2 k_3 (k_6 + k_8+k_9) + k_1 k_5 k_8  c_2+ k_3 k_5 (k_8+k_9) c_3)c_1 c_3 c_8 \\
 \sigma_6 =&  k_{10} k_{12} (k_1 k_4 k_5 + k_2 k_3 k_7 +  k_1 k_5 k_7 c_2+ k_3 k_5 k_7c_3)c_1 c_2 c_3 c_8 \\
 \sigma_7 =& k_9(k_{11} + k_{12}) (k_1 k_4 k_5 + k_2 k_3 k_7 + k_1 k_5 k_7 c_2 + 
   k_3 k_5 k_7 c_3) c_1 c_2 c_3  \\
  \sigma_9 =&   k_9k_{10} (k_1 k_4 k_5 + k_2 k_3 k_7 + k_1 k_5 k_7 c_2 + k_3 k_5 k_7 c_3) c_1 c_2 c_3 c_8. 
\end{align*}
The concentration $c_1$ is only in the label of out-edges from $*$ and thus $c_1$ is not  in $\sigma$.

\begin{proposition}\label{resultS2}
Assume that   there is a spanning tree of $\widehat{G}_{\mmS_{\alpha}}$  rooted at  $*$. Then, there exists a zero or S-positive rational function $\varphi_{i}$ in $C_{\alpha}^c$ with coefficients in $\R(\Con)$, such that equation \eqref{steadystate} for $c_i\in C_{\alpha}$ is satisfied  in $\R(\Con\cup\, C_{\alpha}^c)$ if and only if $c_{i}=\varphi_{i}(C_{\alpha}^c)$.
\end{proposition}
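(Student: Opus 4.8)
The plan is to read the $m$ steady state equations \eqref{steadystate} indexed by $S_i\in\mmsa$ as the single linear system $Ax+z=0$ over the field $\R(\Con\cup C_{\alpha}^c)$, exactly as in Lemma~\ref{Slinear2}, and then to solve it explicitly. The decisive input is the hypothesis that $\widehat{G}_{\mmsa}$ has a spanning tree rooted at $*$: by Proposition~\ref{maxrank2} this forces $A$ to have maximal rank $m$, hence $A$ is invertible over $\R(\Con\cup C_{\alpha}^c)$ and the system has a unique solution $x=-A^{-1}z$. Once invertibility is in hand the asserted biconditional is essentially formal, so the real content is to exhibit the unique solution in the claimed zero-or-S-positive form.

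First I would write the solution via Cramer's rule. Using that $A$ is the $(m+1,m+1)$ principal minor of the Laplacian $\mathcal{L}$ of $\widehat{G}_{\mmsa}$, I would express each coordinate as a quotient of principal minors of $\mathcal{L}$ and then apply the Matrix-Tree theorem to identify
$$\varphi_i=\frac{\sigma_i(C_{\alpha}^c)}{\sigma(C_{\alpha}^c)},\qquad \sigma_i=\sum_{\tau\in\Theta(S_i)}\pi(\tau),\quad \sigma=\sum_{\tau\in\Theta(*)}\pi(\tau),$$
so that both numerator and denominator are sums of products of edge labels of $\widehat{G}_{\mmsa}$.

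The point that needs care is S-positivity of $\varphi_i$, not merely its being a rational function. I would extract this directly from the tree-sum representation: by Lemma~\ref{Slinear2} every edge label of $\widehat{G}_{\mmsa}$ is S-positive in $\R[\Con\cup C_{\alpha}^c]$, hence each $\pi(\tau)$ is S-positive and any nonempty sum of such products is again S-positive. The existence of a tree rooted at $*$ guarantees that $\sigma$ is a nonempty such sum, so $\sigma$ is S-positive and in particular nonzero; the numerator $\sigma_i$ is either an empty sum (when no spanning tree is rooted at $S_i$, giving $\varphi_i=0$) or a nonempty S-positive sum. Thus $\varphi_i$ is zero or S-positive, and since the labels involve only $\Con$ and $C_{\alpha}^c$ its coefficients lie in $\R(\Con)$ with variables in $C_{\alpha}^c$ alone --- no variable from $C_{\alpha}$ survives, because $A$ and $z$ have entries in $\R[\Con\cup C_{\alpha}^c]$.

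To close the biconditional I would argue both directions from invertibility of $A$: if $(c_i)=(\varphi_i)$ then substitution into $Ax+z=0$ recovers Cramer's identity and the equations hold, while conversely any solution equals the unique vector $-A^{-1}z=(\varphi_i)$. The step I expect to require the most care is the S-positivity claim, and it hinges entirely on the root-$*$ hypothesis: it is precisely this hypothesis that makes $\Theta(*)$ nonempty, so that the denominator $\sigma$ is a nonvanishing S-positive polynomial and the quotient $\sigma_i/\sigma$ is well-defined and S-positive (or zero). Beyond this the argument is a matter of assembling Lemma~\ref{Slinear2}, Proposition~\ref{maxrank2}, and the Matrix-Tree description already set up for $\widehat{G}_{\mmsa}$.
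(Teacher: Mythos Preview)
Your proposal is correct and follows essentially the same approach as the paper: the discussion preceding Proposition~\ref{resultS2} already establishes all the ingredients you describe --- the linear system from Lemma~\ref{Slinear2}, maximal rank from Proposition~\ref{maxrank2} (equivalently, nonemptiness of $\Theta(*)$), Cramer's rule expressed through the minors $\mathcal{L}_{(m+1,i)}$ and $\mathcal{L}_{(m+1,m+1)}$, and the Matrix-Tree identification $\varphi_i=\sigma_i/\sigma$ with S-positivity of the tree sums. The proposition in the paper is stated as a summary of that discussion rather than given a separate proof, and your write-up recapitulates that argument faithfully.
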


\emph{Remark. } The procedure outlined here can be stated in full generality: Consider a square linear system of equations $Ax+z=0$, such that the entries of $z$ and the off-diagonal entries of $A$ are positive and  the column sums of $A$ are zero or negative. If $A$ has maximal rank, then the unique solution of the system is non-negative.

\emph{Remark. } If the matrix $A$ does not have maximal rank, then we can always selecxt a subset of $\mmsa$ such that the corresponding matrix has maximal rank and proceed with elimination of the variables in the subset.

\begin{table}[!b]
\begin{tabular}{p{2.7cm}|p{5cm}|p{5cm}}
 &  $\mmsa$ cut  & $\mmsa$ not a cut   \\ \hline
Characterization & $\omega(\mmsa)$  semiflow or  $z=d=0$ &  $\nexists$ semiflow  or $z-d\neq 0$ \\   
\vspace{0.01cm} Elimination of $C_{\alpha}$ works if in $\wG_{\mmsa}$ ...   
& \vspace{0.01cm}  $\exists$  rooted spanning tree  (equivalent to $\overline{\omega}=\sum_{c_i\in C_{\alpha}} c_i$  being the ``only'' conservation law in $C_{\alpha}$) & 
 \vspace{0.01cm}  $\exists$ spanning tree rooted at  $*$ (implies $\nexists$ conservation law in $C_{\alpha}$ and $d\neq 0$) 
\end{tabular}
\caption{Summary of the conditions required for the variable elimination procedure for a non-interacting set $\mmsa$. Here  ``only'' indicates up to multiplication 
by a constant.
}\label{table1}
\end{table} 

\medskip
{\bf Remarks.}
Let $\mmsa\subseteq \mms$ be any non-interacting subset such that $G_{\mmsa}$ is connected. 
We have proven that for all $c_i\in C_{\alpha}$,  there exists a rational function $\varphi_i$ such that $c_i=\varphi_i(C_{\alpha}^c)$ at steady state, provided  some spanning trees exist. 
When $\mmsa$ is a cut,  the P-semiflow $\omega(\mmsa)$ is required in the elimination, while when $\mmsa$ is not a cut, variables are eliminated using only the steady state equations.
 
If $G_{\mmsa}$ is not connected, then the results above apply to the connected components separately, since the underlying node set of each connected component is non-interacting. Further, let $\mmsa^1,\mmsa^2$ be two non-interacting sets such that $G_{\mmsa^1}$ and $G_{\mmsa^2}$ are disjoint and connected. One easily sees that $C(\mmsa^2)\cap C^c(\mmsa^1)=\emptyset$, that is, both sets of variables $C(\mmsa^1)$ and $C(\mmsa^2)$ can be simultaneously eliminated. Additionally, if we let $\mmsa=\mmsa^1\cup\mmsa^2$  then  $C^c(\mmsa)=C^c(\mmsa^1)\cup C^c(\mmsa^2)$.
For instance, consider  $\mmsa=\{S_1,S_4,S_5,S_6,S_8,S_9\}$ in Figure \ref{main:subgraphs}(a). The associated graph has two connected components, which are strongly connected. The concentrations $c_i\in C_{\alpha}$ can be expressed as S-positive rational functions in $C_{\alpha}^c=\{c_2,c_3,c_7\}$.

The conditions to apply the variable elimination procedure are summarized in Table \ref{table1}.
The procedure guarantees that  if positive values are assigned to  all $c_j\in C_{\alpha}^c$, then $c_i$ is non-negative. For $c_i$ to be positive, that is, $\sigma_i\neq 0$, the existence of at least one in-edge to $S_i$ is  necessary. Otherwise the concentration at steady state of $S_i$ is zero, which is  expected if $S_i$ is only consumed and never produced.  Further:

\begin{proposition}\label{snonzero2}
Let $\mmsa$ be a non-interacting subset of $\mms$ such that the concentrations $C_{\alpha}$ can be eliminated from the steady state equations.  Each component of the graph $\wG_{\mmS_{\alpha}}$  is strongly connected if and only if any steady state solution  satisfies $c_j>0$ for all $c_j\in C_{\alpha}$  (and for any positive total amount if appropriate), whenever the variables in   $C_{\alpha}^c$ take positive values. 
\end{proposition}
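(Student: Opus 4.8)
The plan is to reduce the positivity question to a combinatorial statement about rooted spanning trees, and then to invoke a standard characterization of strong connectivity. By the remarks preceding the proposition, the elimination is carried out on each connected component of $G_{\mmsa}$ separately, and the node set of each such component is again non-interacting; hence it suffices to treat the case where $G_{\mmsa}$, and therefore the underlying undirected graph of $\wG_{\mmS_{\alpha}}$, is connected. In this case I would first record the crucial consequence of S-positivity. In both the cut case (Proposition~\ref{resultS1}) and the non-cut case (Proposition~\ref{resultS2}), the eliminated concentration $c_j$ is, at steady state and for positive values of $C_{\alpha}^c$ (and positive $\overline{\omega}$ when appropriate), of the form $c_j=\varphi_j(C_{\alpha}^c)\,c_i$ or $c_j=\varphi_j(C_{\alpha}^c)$, where the numerator of $\varphi_j$ is $\sigma_j(C_{\alpha}^c)=\sum_{\tau\in\Theta(S_j)}\pi(\tau)$ and the relevant denominator is S-positive by the elimination hypothesis. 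Since each $\sigma_j$ is either the zero polynomial or S-positive, evaluating at positive arguments yields either $c_j\equiv 0$ or $c_j>0$. I would conclude, for fixed $S_j\in\mmsa$,
$$c_j>0\text{ for every steady state with positive }C_{\alpha}^c\ \Longleftrightarrow\ \sigma_j\neq 0\ \Longleftrightarrow\ \Theta(S_j)\neq\emptyset,$$
the last equivalence being the Matrix-Tree identity. (In the non-cut case the denominator $\sigma$ is S-positive, giving $c_i>0\Leftrightarrow\sigma_i\neq 0$ uniformly; in the cut case, rooting at some $S_{i_0}$ with $\Theta(S_{i_0})\neq\emptyset$, the value $c_{i_0}=\overline{\varphi}_{i_0}$ and the denominator $\sigma_{i_0}$ are S-positive, again giving $c_j>0\Leftrightarrow\sigma_j\neq 0$.)

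Second, I would translate condition (B) into the statement that $\Theta(v)\neq\emptyset$ for \emph{every} node $v$ of $\wG_{\mmS_{\alpha}}$. In the cut case the node set of $\wG_{\mmS_{\alpha}}$ is exactly $\mmsa$, so this is immediate from the previous equivalence. In the non-cut case the node set is $\mmsa\cup\{*\}$; here the hypothesis that $C_{\alpha}$ can be eliminated already guarantees, by Proposition~\ref{maxrank2}, a spanning tree rooted at $*$, that is $\Theta(*)\neq\emptyset$. Hence in either case
$$(B)\ \Longleftrightarrow\ \Theta(v)\neq\emptyset\ \text{for every node }v\text{ of }\wG_{\mmS_{\alpha}}.$$

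Third, I would prove the graph-theoretic lemma that a directed graph $H$ with connected underlying undirected graph is strongly connected if and only if $\Theta(v)\neq\emptyset$ for every node $v$. For the forward direction, strong connectivity gives, for each $v$, a directed path into $v$ from any other node, and by the extension property stated in the preliminaries any such path extends to a spanning tree rooted at $v$; thus $\Theta(v)\neq\emptyset$. For the converse, a spanning tree rooted at $v$ contains a directed path from every node $w$ to $v$, so $\Theta(v)\neq\emptyset$ for all $v$ forces a directed path between any ordered pair of nodes, i.e.\ $H$ is strongly connected. Applying this to $H=\wG_{\mmS_{\alpha}}$ and chaining with the two equivalences above yields $(A)\Leftrightarrow(B)$ for connected $G_{\mmsa}$, and the general statement follows by applying the argument to each connected component.

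The routine part is the S-positivity bookkeeping and the Matrix-Tree rewriting, both of which are already in place in the preceding sections. The one place that needs care — and which I regard as the main obstacle — is the treatment of the auxiliary node $*$ in the non-cut case: strong connectivity of $\wG_{\mmS_{\alpha}}$ is a statement about all of $\mmsa\cup\{*\}$, whereas (B) concerns only the species $S_j\in\mmsa$, and the two must be reconciled by observing that the elimination hypothesis contributes precisely the missing root at $*$. Overlooking this would make the two sides of the claimed equivalence refer to different node sets.
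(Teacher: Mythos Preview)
Your proposal is correct and follows the same line of reasoning that the paper leaves implicit in the discussion preceding the proposition. The paper does not supply a separate proof; it relies on the already established facts that $\sigma_j=\sum_{\tau\in\Theta(S_j)}\pi(\tau)$ is either zero or S-positive, that $c_j>0$ for positive $C_\alpha^c$ exactly when $\sigma_j\neq 0$, and that strong connectivity is equivalent to the existence of a rooted spanning tree at every node. You have made this chain of equivalences explicit, and your handling of the auxiliary node $*$ in the non-cut case (where $\Theta(*)\neq\emptyset$ is supplied by the elimination hypothesis via Proposition~\ref{maxrank2}) is exactly the point that reconciles the node set of $\wG_{\mmsa}$ with the species set $\mmsa$.
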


\section{Steady state equations}
Let $\mmsa$ be any non-interacting subset  such that $G_{\mmsa}$ is connected and that the variables in $C_{\alpha}$ can be eliminated by the procedure above. 
Let $\Phi_u(C_{\alpha}^c)=0$ be the equation obtained from $\dot{c}_u=0$, $u=m+1,\dots,s$, after elimination of variables in $C_{\alpha}$ and removal of denominators. The denominators can be chosen to be S-positive and multiplication  by  the  denominators does not  change the positivity of solutions.
Fix a maximal set of $n$ independent combinations $\xi^l(c_1,\dots,c_s)=\sum_{i=1}^s \lambda_i^l c_i$ providing conservation laws that includes those corresponding to the full connected components of $G_{\mmsa}$ (that is, to cuts). For given total amounts, the steady state equations are complemented with the equations $\overline\omega^l= \xi^l(c_1,\dots,c_s)$, $l=1,\dots,n$. If the conservation law corresponds to a cut, then the elimination procedure ensures that $\xi^l(\varphi_1(C_{\alpha}^c),\dots,\varphi_m(C_{\alpha}^c),c_{m+1},\dots,c_s) =\overline \omega^l$  and the equation becomes redundant.
 
\begin{theorem}\label{steadystates}
Consider a CRN with a non-interacting set $\mmS_{\alpha}$. Assume that $\mmsa=\mmsa^1\cup\dots\cup\mmsa^r$ is a partition of $\mmsa$ into disjoint sets such that  $G_{\mmsa^j}$ is connected   and $\wG_{\mmsa^j}$ admits a spanning tree  for all $j$. If $\mmsa^j$ is a cut, assume that the spanning tree is rooted at some $S_i\in \mmsa$ and otherwise assume that it is rooted at  $*$.  Let total amounts $\overline\omega^{l}$ be given for the $n$ conservation laws. 

The non-negative steady states with positive values in $C_{\alpha}^c$ are in one-to-one correspondence with the positive solutions to 
$$\Phi_u(C_{\alpha}^c)=0,\qquad  \overline\omega^{l}=\xi^l(C_{\alpha}^c):=\xi^l(\varphi_1(C_{\alpha}^c),\dots,\varphi_m(C_{\alpha}^c),c_{m+1},\dots,c_s)$$ 
for $u=m+1,\dots,s$ and $l=1,\dots,n$.
\end{theorem}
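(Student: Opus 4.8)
The plan is to exhibit the correspondence explicitly and show that the two natural maps between the two sets of solutions are mutually inverse. In one direction, send a non-negative steady state $c=(c_1,\dots,c_s)$ with positive $C_{\alpha}^c$-coordinates to its core part $(c_{m+1},\dots,c_s)$ (recall $C_{\alpha}^c\subseteq\{c_{m+1},\dots,c_s\}$); in the other, send a solution $(c_{m+1},\dots,c_s)$ of the reduced system to the full vector whose eliminated coordinates are $c_i=\varphi_i(C_{\alpha}^c)$ for $i\le m$. Applying the whole discussion componentwise to the partition $\mmsa=\mmsa^1\cup\dots\cup\mmsa^r$, I may take the $\varphi_i$ and the relevant spanning-tree hypotheses to be those supplied by Propositions \ref{resultS1} and \ref{resultS2}.

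First I would check that the reconstruction map lands among non-negative steady states. Non-negativity of the reconstructed coordinates is immediate, since by Propositions \ref{resultS1} and \ref{resultS2} each $\varphi_i$ is zero or S-positive (in $\R(\Con\cup C_{\alpha}^c)$, or in $\R(\Con\cup\{\overline{\omega}\})$ for a cut component), hence evaluates to a non-negative number on positive $C_{\alpha}^c$. For the steady-state property I argue coordinate by coordinate. For $i\le m$, the equation $\dot c_i=0$ holds because the tuple $(\varphi_i(C_{\alpha}^c))_{i\le m}$ satisfies the linear system $Ax+z=0$ of Lemma \ref{Slinear2}: uniquely in the non-cut case, and as the unique solution compatible with the conservation law $\overline{\omega}=\sum_k c_k$ in the cut case. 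For $u>m$, the equation $\dot c_u=0$ becomes $\Phi_u(C_{\alpha}^c)=0$ after substitution, which holds by hypothesis, the S-positive denominator cleared in forming $\Phi_u$ being nonzero on positive $C_{\alpha}^c$. The conservation laws hold as well: those attached to cut components are identities after substitution, as noted just before the theorem, and the remaining ones are exactly the assumed equations $\overline{\omega}^l=\xi^l(C_{\alpha}^c)$.

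Conversely, I would check that the reduction map is well-defined: given a non-negative steady state $c$ with positive $C_{\alpha}^c$, the equations $\dot c_i=0$ for $S_i\in\mmsa$, together with the cut conservation laws, force $c_i=\varphi_i(C_{\alpha}^c)$ for $i\le m$, by the same two propositions; the spanning-tree hypotheses (a tree rooted at some $S_i$ for a cut component, at $*$ otherwise) are exactly what guarantee these expressions exist and are the unique admissible solutions over $\R(\Con\cup C_{\alpha}^c)$. Substituting into the equations $\dot c_u=0$ and into the surviving conservation laws gives $\Phi_u(C_{\alpha}^c)=0$ and $\overline{\omega}^l=\xi^l(C_{\alpha}^c)$, so the core part solves the reduced system.

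The two maps are then mutually inverse. The reduction of a reconstruction is the identity because reconstruction leaves the core coordinates untouched and reduction returns them. For the other composite, the preceding paragraph shows that a non-negative steady state already has its eliminated coordinates equal to $\varphi_i(C_{\alpha}^c)$, so reconstructing them from the core part recovers the original state. The one delicate point — and the place I expect the real work to sit — is this uniqueness: one must be certain the core variables pin down the eliminated ones uniquely. In the non-cut case this is maximality of rank of $A$ (Proposition \ref{maxrank2}); in the cut case $A$ has rank only $m-1$, and uniqueness is recovered solely because the conservation law is adjoined, which is precisely why the cut conservation laws are absorbed into the $\varphi_i$ and then reappear as redundant equations. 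Thus the main obstacle is the careful bookkeeping of which conservation laws are consumed during elimination and which persist as genuine constraints of the reduced system.
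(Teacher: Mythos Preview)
Your proposal is correct and follows essentially the same approach as the paper: set up the bijection via the reconstruction map $c_i=\varphi_i(C_\alpha^c)$ from Propositions~\ref{resultS1} and~\ref{resultS2}, verify non-negativity via S-positivity, and check both directions of the correspondence componentwise over the partition. You are in fact more explicit than the paper about why the two maps are mutually inverse, correctly isolating the uniqueness of the eliminated coordinates (via maximal rank in the non-cut case and rank $m-1$ plus the absorbed conservation law in the cut case) as the point that makes the composite the identity on full steady states; the paper handles this implicitly by referring back to the earlier discussion.
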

\begin{proof}
We have shown that any non-negative steady state solution with positive values for  $c_i\in C_{\alpha}^c$  must satisfy these equations. For the reverse, we apply the following to each connected component of $G_{\mmsa}$.  Consider a positive solution $c=(c_{m+1},\dots,c_s)$  to the equations $\Phi_u(C_{\alpha}^c)=0$ and $\overline\omega^{l}=\xi^l(C_{\alpha}^c)$. For $i=1,\dots,m$, define $c_i$ through Proposition \ref{resultS1} or \ref{resultS2}, depending on whether $S_i$ belongs to a cut $\mmsa^j$ or not. For positive rate constants and positive total amounts, $c_i$ is non-negative (because  the rational functions defining it are S-positive). By construction this procedure automatically ensures that  conservation laws corresponding to  cuts are fulfilled. Using Propositions \ref{resultS1} and \ref{resultS2} the values $c_i$ satisfy \eqref{steadystate}. Since $\Phi_u(C_{\alpha}^c)=0$ is the steady state equation $\dot{c_u}=0$ after substitution of the eliminated variables, this equation is also satisfied and the same reasoning applies to the equation $\overline\omega^{l}=\xi^l(c_1,\dots,c_m)$. Thus, $(c_1,\dots,c_m)$  is a solution to the steady state equations and satisfies the conservation laws corresponding to the total amounts $\overline\omega^{l}$. 
\end{proof}

This theorem together with Proposition \ref{snonzero2} give the following corollary.

\begin{corollary} With the conditions of Theorem \ref{steadystates}, assume further that each graph $\wG_{\mmS_{\alpha}^j}$  is  strongly connected. Then, the positive steady states of the system  are in one-to-one correspondence with the positive solutions to  $\Phi_u(C_{\alpha}^c)=0$ and $\overline\omega^{l}=\xi^l(C_{\alpha}^c)$
for $u=m+1,\dots,s$ and $l=1,\dots,n$.  
Further, if a steady state solution satisfies $c_i=0$ for  $c_i\in C_{\alpha}$,  then there exists some $c_j\in C_{\alpha}^c$ such that  $c_j=0$.
\end{corollary}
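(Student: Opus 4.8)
The plan is to obtain both assertions by combining Theorem \ref{steadystates} with Proposition \ref{snonzero2}; the extra strong-connectedness hypothesis is precisely what promotes the \emph{non-negative} correspondence of Theorem \ref{steadystates} to a correspondence of \emph{positive} points.

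First I would establish the one-to-one correspondence. For the forward inclusion, note that a positive steady state is in particular a non-negative steady state whose $C_{\alpha}^c$-coordinates are positive, so by Theorem \ref{steadystates} it is sent to a positive solution of the system $\Phi_u(C_{\alpha}^c)=0$, $\overline\omega^l=\xi^l(C_{\alpha}^c)$. Conversely, given a positive solution of this reduced system, Theorem \ref{steadystates} returns a non-negative steady state with positive values in $C_{\alpha}^c$; since each component $\wG_{\mmsa^j}$ is strongly connected, Proposition \ref{snonzero2} forces $c_i>0$ for every $c_i\in C_{\alpha}$ as well, while the remaining core coordinates are positive by the choice of solution. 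Hence this steady state is positive. Consequently the non-negative steady states with positive $C_{\alpha}^c$ are exactly the positive steady states, and the bijection furnished by Theorem \ref{steadystates} restricts to the asserted one-to-one correspondence.

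For the second statement I would argue by contraposition directly from Proposition \ref{snonzero2}. Suppose a steady state satisfies $c_i=0$ for some $c_i\in C_{\alpha}$. Then not all coordinates in $C_{\alpha}$ are positive, so the strong-connectedness conclusion of Proposition \ref{snonzero2}, namely that positivity of the variables in $C_{\alpha}^c$ would force positivity throughout $C_{\alpha}$, cannot have its hypothesis met; that is, the variables in $C_{\alpha}^c$ are not all positive. Since the coordinates of a steady state are non-negative, at least one $c_j\in C_{\alpha}^c$ equals zero, which is the claim.

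The argument is short, and I do not expect a genuine obstacle. The only point demanding care is verifying that the strong-connectedness assumption on each piece $\wG_{\mmsa^j}$ of the partition is exactly the ``each component is strongly connected'' condition appearing in Proposition \ref{snonzero2}; this is immediate because the $\mmsa^j$ are by construction the connected components of $G_{\mmsa}$, and $G_{\mmsa}$ and $\wG_{\mmsa}$ share the same connectivity.
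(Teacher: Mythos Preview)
Your proposal is correct and follows precisely the route the paper indicates: the paper gives no explicit proof, stating only that the corollary follows from Theorem \ref{steadystates} together with Proposition \ref{snonzero2}, and your argument is a faithful unpacking of that sentence. Your closing remark about identifying the pieces $\mmsa^j$ with the connected components of $G_{\mmsa}$ is appropriate; note that although the \emph{statement} of Theorem \ref{steadystates} does not say this explicitly, its proof does apply the elimination ``to each connected component of $G_{\mmsa}$'', so your reading is the intended one.
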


In the main example,   the set $\mmsa=\{S_1,S_4,S_5,S_6,S_8,S_9\}$ is the largest non-interacting subset of $\mms$ and thus provides the maximal number of linearly eliminated concentrations. The initial steady state system of equations is reduced to three equations: For instance the one corresponding to $\dot{c_7}=0$, and the two conservation laws 
$\overline{\omega}^3 = c_2+c_4+c_6+c_7+c_9$ and $\overline{\omega} = c_3+c_5+c_6+c_7+c_9$ (which corresponds to $\omega^4-\omega^3$).
Because of the conservation laws, the equations $\dot{c_2}=0$ and $\dot{c_3}=0$ are redundant. 
The elimination from cuts provides $c_k=\sigma_k/\sigma_1$ for $k=4,5,6$, $c_1=\overline{\omega}^1\sigma_1/(\sigma_1+\sigma_4+\sigma_5+\sigma_6)$, $c_9=\sigma_9/\sigma_8$
 and $c_8=\overline{\omega}^2\sigma_8/(\sigma_8+\sigma_9)$ 
 with $\sigma_i$ S-positive polynomials in $c_2,c_3,c_7$ and coefficients in $\R(\Con\cup\{\overline{\omega}^1,\overline{\omega}^2\})$ for all $i$. The steady state equations are thus reduced to:
\begin{align*}
0 &= k_9\sigma_6(\sigma_8+\sigma_9)\sigma_8 - k_{10} \overline{\omega}^2\sigma_8^2\sigma_1 c_7  + k_{11} \sigma_1\sigma_9(\sigma_8+\sigma_9) \\ \overline{\omega}^3 \sigma_1\sigma_8 &= \sigma_1\sigma_8c_2+ \sigma_4\sigma_8+\sigma_6\sigma_8+\sigma_1\sigma_8 c_7+\sigma_1\sigma_9 \\ \overline{\omega} \sigma_1\sigma_8 &=  \sigma_1\sigma_8c_3+\sigma_5\sigma_8+\sigma_6\sigma_8+ \sigma_1\sigma_8c_7+\sigma_1\sigma_9.
\end{align*}

\section*{Acknowledgments}
EF is supported by a postdoctoral grant from the ``Ministerio de Educaci\'on'' of Spain and the project  MTM2009-14163-C02-01 from the ``Ministerio de Ciencia e Innovaci\'on''.  CW is supported by the Lundbeck Foundation, Denmark and the Leverhulme Trust, UK.  Part of this work was done while EF and CW were visiting Imperial College London in fall 2011.

\end{document}